\documentclass[12pt]{elsarticle}
\usepackage{amsmath,amsthm,amssymb,bbm}

\newtheorem{theorem}{Theorem}[section]

\newtheorem{lemma}{Lemma}[section]
\newtheorem{proposition}{Proposition}[section]
\theoremstyle{definition}

\theoremstyle{remark}
\newtheorem{remark}{Remark}[section]

\newcommand{\abs}[1]{\left\lvert#1\right\rvert}
\newcommand{\norm}[1]{\left\lVert#1\right\rVert}
\newcommand{\ind}[1]{\mathbbm{1}_{#1}}
\newcommand{\set}[1]{\left\{#1\right\}}
\newcommand{\E}{\mathbf{E}}
\newcommand{\Prob}{ \mathbf{P}}
\newcommand{\eps}{\varepsilon}
\newtheorem{thm}{Theorem}[section]

\numberwithin{equation}{section}

\newcommand{\ud}{d}

\begin{document}
\title{Small ball properties and representation results}
\author[m1]{Yuliya Mishura} 
\ead{myus@univ.kiev.ua}
\author[m1]{Georgiy Shevchenko\corref{c1}}
\ead{zhora@univ.kiev.ua}
\address[m1]{National Taras Shevchenko University of Kyiv, Mechanics and Mathematics Faculty, 64~Volodymyrska, 01601 Kyiv, Ukraine}
\cortext[c1]{Corresponding author}

\begin{abstract}
 We show that small ball estimates together with H\"older continuity assumption allow to obtain new representation results in models with long memory. In order to apply these results, we establish small ball probability estimates for Gaussian  processes whose incremental variance admits two-sided estimates and  the incremental covariance preserves sign. As a result, we obtain small  ball estimates for integral transforms of  Wiener processes and of fractional Brownian motion with Volterra kernels.
\end{abstract}

\begin{keyword}
integral representation\sep
generalized Lebesgue--Stieltjes integral \sep
small ball estimate\sep
quasi-helix\sep 
fractional Brownian motion 
\MSC[2010]  60H05 \sep 60G15  \sep 60G22 
\end{keyword}

\maketitle

%\tableofcontents
\section{Introduction}

One of the most important questions for financial modeling is the question of replication, which loosely can be formulated as follows. Suppose that a continuous time financial market model is driven by a stochastic process $X= \set{X_t,t\in[0,1]}$ given on some stochastic basis $(\Omega, \mathcal{F}, \mathbb{F}= (\mathcal{F}_t)_{t \in [0,1]}, \mathsf{P})$ satisfying usual assumptions. A contingent claim, modeled by an $\mathcal{F}_1$-measurable random variable $\xi$, is hedgeable, if it admits the representation
\begin{equation}\label{equat1}
\xi=\int_0^1\psi_tdX_t,
\end{equation}
with some  $\mathbb{F}$-adapted (replicating) process $\psi$. In the case where $X$ is a Wiener process there are two main representation results. The famous It\^o representation theorem establishes \eqref{equat1} for centered square integrable random variables $\xi$. Less known is a result of Dudley \cite{dudley}, who proved that every random variable $\xi$ has representation \eqref{equat1}. There are also a lot of results for martingales or semimartingales, we will not cite them, as this is not our main concern here.

The case where $X$ is not a semimartingale is less studied. The pioneering results were established in \cite{mish-shev-valk} for fractional Brownian motion (fBm) $B^H$ with Hurst index $H>1/2$. The construction  used in \cite{mish-shev-valk} relies on the H\"older continuity and a small ball estimates for $B^H$. This fact was later used in \cite{vita-shev2,vita} to extend the results of \cite{mish-shev-valk} to a larger class of integrands. In \cite{vita-shev2}, it is also shown that in the case where $X=W+B^H$ is a sum of a Wiener process and an fBm with $H>1/2$, any random variable has representation \eqref{equat1}. It is worth to mention also the article  \cite{vita-shev}, where the existence of a continuous integrand $\psi$ is  shown in the fBm case. 

The main problem with the specific  small ball property assumed in the papers \cite{vita-shev2, vita} is that it is hard to verify. As it was mentioned in \cite{li-shao}, an upper bound in  small ball probability  gives lower estimates for metric entropy, which are usually hard to obtain. On the other hand, the assumptions of \cite{vita-shev2, vita} are not optimal for establishing representation results.

The goal of this paper is twofold.  First,  we investigate precise conditions needed to obtain the representation results and compare them to the small ball estimates. Second, we analyze carefully how to get  an upper bound for  small ball probability  for Gaussian processes with variation distance $\E|X_t-X_s|^2$ satisfying two-sided power bounds, possibly, with different powers. These two steps allow us to establish the representation results for a wide class of processes.  This class includes some  Gaussian processes $X$ having non-stationary increments, e.g.\ processes that can be  represented as  the integrals of smooth Volterra kernels w.r.t.\ a Wiener process or fBm. 
 
The paper is organized as follows. In Section \ref{sec3}, we prove representation theorems for H\"older continuous processes satisfying small ball property. In Section \ref{sec2}, we establish the small ball estimates for Gaussian  processes whose incremental variance satisfies two-sided power estimates and incremental covariance preserves sign.  In Section~\ref{sec4}, we prove representation results for the Gaussian processes considered in Section~\ref{sec2}, and  give  examples of processes, for which the representation results are in place. The examples include subfractional Brownian motion, bifractional Brownian motion, and integral transforms of Wiener process and fractional Brownian motion with Volterra kernels.

\section{Representation theorems for H\"older continuous processes satisfying small ball estimates}\label{sec3}

This section is concerned with the representation results of the form \eqref{equat1}. Here we establish general results for processes satisfying H\"older continuity and small ball assumptions. 

Consider an adapted process $X$ satisfying the following assumptions, where $C^\theta[0,1]$ denotes the class of H\"older continuous functions of order $\theta$.
\begin{itemize}
\item[$(H)$] H\"older continuity: $X\in C^\theta[0,1]$ a.s.\ for some $\theta>1/2$.
\item[$(S)$] Small ball estimate: there exist positive constants $\lambda,\mu,K_1,K_2$ such that for all $\eps>0$, $\Delta>0$, $s\in[0,1-\Delta]$
\begin{equation}\label{smallballassumption}
\mathbf{P}\left\{\sup_{s\leq t\leq s+\Delta}|X_t-X_s|\leq K_1 \varepsilon\right\}\leq\exp\left\{-K_2\varepsilon^{-\lambda}\Delta^{\mu}\right\}.
\end{equation}
\end{itemize}
\begin{remark}
In \cite{vita}, the author establishes existence of representation \eqref{equat1} for a centered Gaussian process. The assumptions of \cite{vita} are close to be a particular case of $(H)$ and $(S)$. Namely, the author assumes $(S)$ with $\lambda=1/H$, $\mu=1$. Instead of $(H)$,  the incremental variance is assumed to satisfy $\E (X_t-X_s)^2\le C|t-s|^{2H}$, which in the Gaussian case implies that $X\in C^\theta[0,1]$ for any $\theta\in(0,H)$.
\end{remark}
\begin{remark}\label{rem:exponents}
It is clear that the exponents $\lambda,\mu,\theta$ must satisfy $\theta\le \mu/\lambda$. Indeed, assume on the contrary that $\theta>\mu/\lambda$ and take arbitrary $\delta\in(\mu/\lambda,\theta)$. Then for each $n\ge 1$
$$
\mathbf{P}\left\{\sup_{0\le t\le  n^{-1}}|X_t-X_0|\le n^{-\delta}\right\}\leq\exp\left\{-K_2n^{(\lambda\delta-\mu)}\right\},
$$
whence by the Borel-Cantelli lemma, $\sup_{0\le t\le  n^{-1}}|X_t-X_0|> n^{-\delta}$ for all $n$ large enough, which contradicts $(H)$.
\end{remark}

Further we give basic facts on fractional integration; for more detail, see \cite{samko,zahle}. Consider functions  $f,g\colon[0,1]\rightarrow \mathbb{R}$, and let  $[a,b]\subset [0,1]$.
For $\alpha\in (0,1)$ define fractional derivatives
\begin{gather*}
\big(D_{a+}^{\alpha}f\big)(x)=\frac{1}{\Gamma(1-\alpha)}\bigg(\frac{f(x)}{(x-a)^\alpha}+\alpha
\int_{a}^x\frac{f(x)-f(u)}{(x-u)^{1+\alpha}}du\bigg)1_{(a,b)}(x),\\
\big(D_{b-}^{1-\alpha}g\big)(x)=\frac{e^{i\pi
\alpha}}{\Gamma(\alpha)}\bigg(\frac{g(x)}{(b-x)^{1-\alpha}}+(1-\alpha)
\int_{x}^b\frac{g(x)-g(u)}{(x-u)^{2-\alpha}}du\bigg)1_{(a,b)}(x).
\end{gather*}
Assuming that
 $D_{a+}^{\alpha}f\in L_1[a,b]$, $D_{b-}^{1-\alpha}g_{b-}\in
L_\infty[a,b]$, where $g_{b-}(x) = g(x) - g(b)$,
the generalized Lebesgue--Stieltjes
integral %$\int_a^bf(x)dg(x)$
is defined as
\begin{equation*}\int_a^bf(x)dg(x)=e^{-i\pi\alpha}\int_a^b\big(D_{a+}^{\alpha}f\big)(x)
\big(D_{b-}^{1-\alpha}g_{b-}\big)(x)dx.
\end{equation*}
It is well known that for $f\in C^\beta[a,b]$, $g\in C^\gamma[a,b]$ with $\beta+\gamma>1$, the generalized Lebesgue--Stieltjes
integral $\int_a^bf(x)dg(x)$ exists and equals the limit of Riemann sums.

In order to integrate w.r.t. $X$, fix some $\alpha \in(1-\theta,1/2)$ and introduce the following norm:
\begin{gather*}
\norm{f}_{\alpha,[a,b]} = \int_a^b \left(\frac{|{f(s)}|}{(s-a)^\alpha} + \int_a^s \frac{|{f(s)-f(z)}|}{(s-z)^{1+\alpha}}dz\right)ds.
\end{gather*}
For simplicity we will abbreviate $\norm{\cdot}_{\alpha,t} = \norm{\cdot}_{\alpha,[0,t]}$. Denote $\Lambda_\alpha:= \sup_{0\le s<t\le 1} |{D_{t-}^{1-\alpha}X_{t-}}(s)|$. In view of $(H)$, $\Lambda_\alpha<\infty$.

Then for any $t\in(0,1]$ and any $f$ such that $\norm{f}_{\alpha,t}<\infty$, the integral $\int_0^t f(s) dX_s$ is well defined as a generalized Lebesgue--Stieltjes integral, and the following estimate is clear:
\begin{gather*}
\Big|{\int_0^t f(s)dX_s}\Big|\le \Lambda_\alpha \norm{f}_{\alpha,t}.
\end{gather*}
%The next result is the key point in the proofs of the representation theorems. It was established in \cite{mish-shev-valk} for fractional Brownian motion, in \cite{vita} for Gaussian processes under rather restrictive conditions on the correlation function that can not be effectively checked for nonstationary processes with nonstationary increments, and in \cite{vita-shev} for H\"{o}lder processes admitting some small ball probabilities estimates. Now we establish this result for Gaussian processes satisfying conditions $(A1)$, $(A2)$ and $(B^+)$, and these conditions can be checked without any technical difficulties  for  a wide spectrum of processes,  as we shall see later.
\begin{lemma}\label{lemma12}
\label{lemma:aux_general}
Let an adapted  process $X$ satisfy conditions $(H)$, $(S)$. %$(A1)$, $(A2)$, $(B^+)$ with $0<2H_1-1< H_2\le H_1$.
%with
%, moreover,
%\begin{equation}\label{equat7}
%1-3H_1+H_2+H_1H_2>0.
%\end{equation}
Then there exists an adapted process $\phi$ such that $\norm{\phi}_{\alpha,t}<\infty$ for every $t<1$ and
\begin{equation}
\label{rep:aux-lemma}
\lim_{t\to 1-} \int_0^t \phi_s\ud X_s = +\infty
\end{equation}
almost surely.
\end{lemma}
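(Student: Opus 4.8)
The plan is to exhaust $[0,1)$ by the dyadic sequence $t_n = 1 - 2^{-n}$ and to build $\phi$ piece by piece on the intervals $I_n = [t_n,t_{n+1}]$ of length $\Delta_n = 2^{-(n+1)}$, arranging that each piece pushes the running value of $\int_0^t\phi\,\ud X$ up by at least $1$ with overwhelming probability, and then to conclude by the Borel--Cantelli lemma that the integral diverges to $+\infty$.

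The device that makes each contribution positive without anticipating the direction of $X$ is the Young chain rule: since $X\in C^\theta$ with $\theta>1/2$, for $0\le a\le b\le 1$ one has $\int_a^b (X_s-X_a)\,\ud X_s = \tfrac12(X_b-X_a)^2\ge 0$, the generalized Lebesgue--Stieltjes integral coinciding with the Young integral because integrand and integrator are both $\theta$-H\"older with $2\theta>1$. To exploit $(S)$, which controls the supremum rather than the endpoint increment, I introduce the stopping time $\tau_n = \inf\set{t\ge t_n:\abs{X_t-X_{t_n}}\ge K_1\eps_n}\wedge t_{n+1}$ and set
$$\phi_s = \sum_{n\ge 0} c_n\,(X_s-X_{t_n})\,\ind{[t_n,\tau_n]}(s),\qquad c_n = 2K_1^{-2}\eps_n^{-2}.$$
This $\phi$ is adapted, since on $I_n$ the factor $X_s-X_{t_n}$ is $\mathcal F_s$-measurable and $\set{\tau_n\ge s}\in\mathcal F_s$. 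By the chain rule (and continuity of the integrator) $\int_{t_n}^{t_{n+1}}\phi_s\,\ud X_s = \tfrac{c_n}{2}(X_{\tau_n}-X_{t_n})^2\ge 0$, and on the event $A_n = \set{\sup_{t_n\le t\le t_{n+1}}\abs{X_t-X_{t_n}}>K_1\eps_n}$ continuity forces $\abs{X_{\tau_n}-X_{t_n}}=K_1\eps_n$, so this contribution equals exactly $1$.

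I then choose $\eps_n\downarrow 0$ so that $K_2\eps_n^{-\lambda}\Delta_n^\mu\ge n$ (possible since $\Delta_n^\mu>0$), whence $(S)$ gives $\sum_n\Prob(A_n^c)\le\sum_n e^{-n}<\infty$, and Borel--Cantelli yields a random index $N_0$ with $A_n$ holding for all $n\ge N_0$ almost surely. As every completed interval contributes a nonnegative amount and every $n\ge N_0$ contributes at least $1$, for $t$ with $t_{N+1}\le t$ one obtains $\int_0^t\phi_s\,\ud X_s\ge N-N_0$, which tends to $+\infty$ as $t\to 1-$; the incomplete last interval contributes $\tfrac{c_N}{2}(X_{t\wedge\tau_N}-X_{t_N})^2\ge 0$ and does no harm. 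Finiteness of $\norm{\phi}_{\alpha,t}$ for each fixed $t<1$ is routine: only finitely many intervals meet $[0,t]$, $\phi$ is bounded there and piecewise $\theta$-H\"older with finitely many jumps, and the kernel singularities $(s-z)^{-1-\alpha}$ are integrable because $\theta>\alpha$ and $\alpha<1$.

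The main obstacle is exactly the tension between adaptedness and forcing a \emph{positive} increment: one cannot fix the sign of a constant integrand in advance, and the hypothesis $(S)$ bounds only the probability that the supremum is small, not that the endpoint increment is. These are reconciled by the perfect-square form of the chain rule, which removes any need to know the direction of motion, together with the stopping time $\tau_n$, which converts the supremum-level information in $(S)$ into a captured increment of exact size $K_1\eps_n$. Once this is arranged, the remaining estimates are soft.
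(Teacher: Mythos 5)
Your proof is correct, and it follows the same overall strategy as the paper's: a sequence $t_n\uparrow 1$, on each $[t_n,t_{n+1}]$ a stopped integrand of chain-rule type so that every contribution to the integral is nonnegative, and assumption $(S)$ plus Borel--Cantelli to ensure that all but finitely many intervals contribute a definite amount. The implementations differ in two substantive ways, both of which simplify matters for this particular lemma. First, the paper integrates $k^{\beta-1}g_k'(X_s-X_{t_{k-1}})$ where $g_k(x)=\sqrt{x^2+4^{-k}}-2^{-k}$ is a smooth approximation of $|x|$; a good interval then contributes roughly $k^{\beta-1}\cdot k^{-\beta}=k^{-1}$, divergence comes from the harmonic series, and this forces polynomially decaying lengths $\Delta_k=Kk^{-\gamma}$ together with a genuine interplay of exponents ($\beta>\mu/\lambda$, $1<\gamma<\beta\lambda/\mu$). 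Your quadratic payoff $g(x)=x^2/2$ needs no smoothing (the Young chain rule is exact), and the renormalization $c_n=2K_1^{-2}\eps_n^{-2}$ makes each good interval contribute exactly $1$, which decouples the threshold from the payoff: $\eps_n$ may be taken as small as needed to make $\sum_n\exp\{-K_2\eps_n^{-\lambda}\Delta_n^{\mu}\}$ summable, so dyadic $\Delta_n$ suffice and no exponent bookkeeping arises at all. What the paper's version buys in exchange is a uniformly bounded integrand, $\abs{g_k'}\le 1$; your integrand has sup-norm $2K_1^{-1}\eps_n^{-1}\to\infty$ on the $n$-th interval, which is harmless here (any fixed $t<1$ meets only finitely many intervals) but is precisely the kind of control needed in the proof of Theorem~\ref{thm:general}, where $\norm{\psi}_{\alpha,[t_n,1]}$ must tend to zero as $t_n\to 1$. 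Two steps you should state explicitly rather than gesture at, to match the paper's level of rigor: the identity $\int_{t_n}^{t_{n+1}}\phi_s\,\ud X_s=\tfrac{c_n}{2}(X_{\tau_n}-X_{t_n})^2$ rests on additivity of the generalized Lebesgue--Stieltjes integral over $[t_n,\tau_n]$ and $[\tau_n,t_{n+1}]$ (your integrand jumps at $\tau_n$, so it is not globally H\"older on $[t_n,t_{n+1}]$) combined with the change-of-variable formula on $[t_n,\tau_n]$, exactly as the paper invokes \cite{Mish}; and on $A_n$ the equality $\abs{X_{\tau_n}-X_{t_n}}=K_1\eps_n$ uses continuity of $X$, which is guaranteed by $(H)$.
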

%\begin{remark}
%	The assumption \eqref{rep:aux-lemma} is equivalent to $H_1 < \frac{1+H_2}{3-H_2}$. On one hand, the right hand side is greater than $H_2$ for $H_2<1$, so the assumption is compatible with $H_1\ge H_2$ (and is satisfied whenever $H_1=H_2$). On another hand, it is less than $(1+H_2)/2$, so this assumption implies negative exponents of $\eps$ in the small ball probability estimate.
%\end{remark}
\begin{remark}
	A slight modification of the argument allows to construct an integrand $\phi$ which is additionally continuous on $[0,1)$. As this is not our primary concern here, we refer to \cite{vita-shev} for an idea how the modification is carried out.
\end{remark}
\begin{proof}
Choose some $\gamma\in(1,1/\theta)$  and define $\Delta_k = K k^{-\gamma}$, $k\ge 1$, where $K = \left(\sum_{k=1}^\infty k^{-\gamma}\right)^{-1}$. Set $t_0=0$, $t_n = \sum_{k=1}^{n}\Delta_k$, $n\ge 1$. Then $t_n\uparrow 1$, $n\to\infty$.

Further define the sequence of continuously differentiable functions $g_n(x) = \sqrt{x^2 + 4^{-n}} - 2^{-n}$, $n\ge 1$. Obviously, $|x|\ge g_n(x)\ge (|x|-2^{-n})\vee 0$. Finally,
fix some $\beta > \mu/\lambda$,
introduce a sequence of stopping times
\begin{equation*}
\tau_n = \min\left\{t\geq t_{n-1} : |X(t) - X({t_{n-1}})| \geq n^{-\beta}\right\} \wedge t_n,
\end{equation*}
and  set
$$
\phi_s = \sum_{k=1}^{\infty} k^{\beta-1}g'_{k}(X_s - X_{t_{k-1}})\textbf{1}_{[t_{k-1},\tau_k)}(s).
$$

We will check that $\phi$ is as required. The finiteness of the norm $\norm{\phi}_{\alpha,t}$ is shown exactly as in \cite{mish-shev-valk} and therefore will be omitted. Thanks to the change of variable integration formula for the generalized Lebesgue--Stieltjes integral (see e.g.\ \cite{Mish}), for $t\in[t_{k-1},t_{k})$
\begin{gather*}
\int_{t_{k-1}}^t \phi_s dX_s = k^{\beta-1} g_k(X_{t\wedge \tau_k}-X_{t_{k-1}}).
\end{gather*}
Then for any $n\ge 1$ and  $t\in[t_{n-1},t_{n})$
\begin{gather*}
\int_0^t\phi_s dX_s = \sum_{k=1}^{n-1} \int_{t_{k-1}}^{t_k} \phi_s dX_s +  \int_{t_{n-1}}^t \phi_s dX_s \\
= \sum_{k=1}^{n-1} k^{\beta-1} g_k(X_{\tau_k}-X_{t_{k-1}}) +  n^{\beta-1}g_n(X_{t\wedge \tau_n}-X_{t_{n-1}})\\
\ge \sum_{k=1}^{n-1} k^{\beta-1}\big(|X_{\tau_k}-X_{t_{k-1}}|- 2^{-k}\big) \ge \sum_{k=1}^{n-1} k^{\beta-1}|X_{\tau_k}-X_{t_{k-1}}|- \sum_{k=1}^\infty k^{\beta-1} 2^{-k}.
\end{gather*}
In order to prove the claim, we need to show that the series $\sum_{k=1}^{n-1} k^{\beta-1}|X_{\tau_k}-X_{t_{k-1}}|$ diverges. To this end it suffices to show that, almost surely, $\tau_k <t_k$ for all $k$ large enough so that $k^{\beta-1}|X_{ \tau_k}-X_{t_{k-1}}| = k^{-1}$ eventually.

Clearly, $\tau_k= t_k$ implies
$\sup_{t\in[t_{k-1},t_k]}|X_{t}-X_{t_{k-1}}|\le k^{-\beta}$, so by  $(S)$,
$$
\mathbf{P}(\tau_k=t_k)\le K_1\exp\left\{ - K_2 k^{\beta \lambda -\gamma \mu }\right\}.
$$
If  the exponent near $k$ is positive, then we are done. The positivity  is easily seen to be  equivalent to $\gamma < \beta \lambda/\mu$.
On the other hand, we must have $\gamma>1$. Due to the choice of $\beta$, we can choose $\gamma$ satisfying  both requirements, thus finishing the proof.
\end{proof}

We are ready to state the main result of this section. While its proof heavily borrows from \cite{mish-shev-valk,vita-shev,vita-shev2}, we decided nevertheless to give it for two reasons. Firstly, we aimed to keep the article self-consistent. Secondly, we desired to stress all the key points of the proof in order to make sure that the assumptions are optimal.
\begin{theorem}\label{thm:general}
Let an adapted  process $X$ satisfy conditions $(H)$, $(S)$, and a random variable $\xi$ be such that $\xi=Z_1$ for some adapted process $Z$ such that $Z\in C^\rho[0,1]$ with  $\rho >  \mu/(\lambda\theta)-1$ a.s. Then there exists an adapted process $\psi$  such that $\norm{\psi}_{\alpha,1}<\infty$ for some $\alpha\in (1-\theta,1/2)$ and
\begin{equation}\label{eq:represent}
\int_0^1 \psi_s\ud X_s = \xi
\end{equation}
almost surely.
\end{theorem}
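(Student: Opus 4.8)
The plan is to mimic the construction of Lemma~\ref{lemma12}, but instead of merely forcing the integral to diverge, to steer the running integral $Y_t := \int_0^t \psi_s\,\ud X_s$ so that it tracks the target process $Z$ and satisfies $Y_t \to Z_1 = \xi$ as $t\to 1-$. I would fix a partition $0 = t_0 < t_1 < \dots \uparrow 1$ with $\Delta_n = t_n - t_{n-1}\sim n^{-\gamma}$, the exponent $\gamma>1$ to be chosen large at the end, and build $\psi$ interval by interval. On $[t_{n-1},t_n)$ the aim is to move $Y$ by the \emph{known} (that is, $\mathcal F_{t_{n-1}}$-measurable) amount $c_n := Z_{t_{n-1}} - Y_{t_{n-1}}$, so that, if the step succeeds, $Y_{t_n} = Z_{t_{n-1}}$. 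Since $Z\in C^\rho$ is continuous, $Z_{t_{n-1}}\to Z_1 = \xi$, and once every step succeeds one has $|c_n| = |Z_{t_{n-1}} - Z_{t_{n-2}}|\le C\,\Delta_n^{\rho}$.

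For the per-interval integrand I would take, as in the lemma, a smoothed modulus $g_n(x) = \sqrt{x^2 + \delta_n^2} - \delta_n$ (so $g_n(0)=0$, $0\le g_n'\le 1$, $\norm{g_n''}_\infty = \delta_n^{-1}$) and set $\psi_s = a_n\, g_n'(X_s - X_{t_{n-1}})\mathbf{1}_{[t_{n-1},\sigma_n)}(s)$, where $\sigma_n = \inf\{t\ge t_{n-1}: |X_t - X_{t_{n-1}}|\ge r_n\}\wedge t_n$ and the smoothing scale is chosen comparable to the reach, $\delta_n\sim r_n$. The change-of-variable formula gives $\int_{t_{n-1}}^t \psi_s\,\ud X_s = a_n\, g_n(X_{t\wedge\sigma_n}-X_{t_{n-1}})$, so choosing the $\mathcal F_{t_{n-1}}$-measurable coefficient $a_n = c_n/g_n(r_n)$ makes $Y_{t_n} = Y_{t_{n-1}} + c_n = Z_{t_{n-1}}$ exactly on the event $\{\sigma_n < t_n\}$. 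The step fails only if $\sup_{t\in[t_{n-1},t_n]}|X_t - X_{t_{n-1}}| < r_n$, whose probability is bounded via $(S)$ by $\exp\{-K_2 (r_n/K_1)^{-\lambda}\Delta_n^\mu\}$ up to a constant; taking $r_n\sim \Delta_n^{\mu/\lambda}(\log n)^{-1/\lambda}$ (admissible because $\mu/\lambda\ge\theta$ by Remark~\ref{rem:exponents}, so this scale lies below the Hölder scale $\Delta_n^\theta$ and the step is not a priori impossible) makes these probabilities summable. By Borel--Cantelli only finitely many steps fail, hence $Y_{t_n}=Z_{t_{n-1}}$ for all $n$ large; and since the oscillation of $Y$ inside $[t_{n-1},t_n)$ is at most $|a_n|g_n(r_n)=|c_n|\le C\Delta_n^\rho\to 0$, this forces $Y_t\to\xi$ as $t\to 1-$.

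The hard part is verifying $\norm{\psi}_{\alpha,1}<\infty$ for a suitable $\alpha\in(1-\theta,1/2)$, and this is exactly where the exponent condition enters. Following the routine estimates of \cite{mish-shev-valk}, I would bound the contribution of the $n$-th interval to the two parts of $\norm{\cdot}_{\alpha}$ separately: the pointwise part contributes of order $|a_n|\Delta_n\sim \Delta_n^{1+\rho}r_n^{-1}$, while the fractional increment part, using $|g_n'(X_s)-g_n'(X_z)|\le \min(2,\delta_n^{-1}C_X|s-z|^\theta)$ together with $(H)$, contributes of order $|a_n|\Delta_n\,\delta_n^{-\alpha/\theta}\sim \Delta_n^{1+\rho}r_n^{-1-\alpha/\theta}$ and therefore dominates. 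Inserting $r_n\sim \Delta_n^{\mu/\lambda}(\log n)^{-1/\lambda}$ and $\Delta_n\sim n^{-\gamma}$, the $n$-th term is, up to logarithmic factors, $n^{-\gamma[1+\rho-(\mu/\lambda)(1+\alpha/\theta)]}$. Choosing $\alpha$ close to $1-\theta$ drives the bracket to $1+\rho-\mu/(\lambda\theta)$, which is positive precisely under the hypothesis $\rho>\mu/(\lambda\theta)-1$; with the bracket positive I would then take $\gamma$ large enough that the exponent exceeds $1$, yielding a convergent series. The finitely many failed steps produce finitely many larger coefficients $a_n$ and hence only a finite extra contribution, so $\norm{\psi}_{\alpha,1}<\infty$. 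Finiteness of this norm makes $\int_0^1\psi_s\,\ud X_s$ well defined and equal to $\lim_{t\to1-}Y_t=\xi$, which completes the argument; adaptedness of $\psi$ is immediate, since each $a_n$ is $\mathcal F_{t_{n-1}}$-measurable, $\sigma_n$ is a stopping time, and $g_n'(X_s-X_{t_{n-1}})$ is $\mathcal F_s$-measurable.
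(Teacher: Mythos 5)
Your construction is correct and arrives at the same exponent condition, but it reorganizes the paper's argument in one essential way. The paper's proof splits each step into two cases: a ``normal'' step, which uses the smoothed modulus $g_n$ with a \emph{deterministic} coefficient $\sigma_n$ and aims only to reproduce the increment $Z_{t_n}-Z_{t_{n-1}}$, and a ``correction'' step, entered after a failure, which invokes Lemma~\ref{lemma12} (the existence of an integrand whose integral diverges to $+\infty$) to wipe out an arbitrarily large accumulated discrepancy in one interval. Your single-case construction with the adaptive, $\mathcal F_{t_{n-1}}$-measurable coefficient $a_n = c_n/g_n(r_n)$ absorbs both cases at once: after the a.s.\ finitely many failures, the first successful step removes the entire discrepancy because $a_n$ scales automatically with $c_n$, so Lemma~\ref{lemma12} is never needed inside the proof of the theorem. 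A second genuine simplification is that your failure event $\set{\sup_{[t_{n-1},t_n]}|X_t-X_{t_{n-1}}|<r_n}$ has a deterministic threshold, so $(S)$ and Borel--Cantelli apply directly, whereas the paper must first pass from its events $A_n$ (whose thresholds involve the random increments $\delta_n$ of $Z$) to events $C_n$ with deterministic thresholds via $\delta_n=o(\Delta_n^\kappa)$. The quantitative bookkeeping is the same in both proofs: the paper's choices $\nu_n=\Delta_n^\kappa\sigma_n^{-1}$, $\sigma_n=n^{\epsilon}\Delta_n^{\kappa-\mu/\lambda}$ correspond to your $\delta_n\sim r_n\sim\Delta_n^{\mu/\lambda}$ and $|a_n|\sim\Delta_n^{\rho-\mu/\lambda}$, and both lead to the requirement $1+\rho-(\mu/\lambda)(1+\alpha/\theta)>0$, which with $\alpha\downarrow 1-\theta$ is exactly $\rho>\mu/(\lambda\theta)-1$.

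Two points need tightening, both expository rather than structural. First, the pointwise part of the tail norm over $[t_m,1]$ on the interval adjacent to $t_m$, and the jump of $\psi$ to zero at each $\sigma_n$, contribute terms of order $|a_n|\Delta_n^{1-\alpha}$ rather than $|a_n|\Delta_n$ (these are the paper's $\sigma_k\Delta_k^{1-\alpha}$ terms); this is harmless since, as you observe, $r_n\lesssim\Delta_n^{\mu/\lambda}\le\Delta_n^{\theta}$ makes $|a_n|\Delta_n r_n^{-\alpha/\theta}$ dominate, and that series converges under the same condition. Second, finiteness of $\norm{\psi}_{\alpha,1}$ alone does not by itself identify $\int_0^1\psi_s\,\ud X_s$ with $\lim_{t\to1-}Y_t$: one also needs the tail norms to vanish, $\norm{\psi}_{\alpha,[t_m,1]}\to0$ (the paper's property $(\Phi2)$), so that $\bigl|\int_{t_m}^1\psi_s\,\ud X_s\bigr|\le\Lambda_\alpha\norm{\psi}_{\alpha,[t_m,1]}\to0$ and the generalized Lebesgue--Stieltjes integral over $[0,1]$ equals the limit of the partial integrals. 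Your summable per-interval bounds give exactly this, so you should replace the sentence ``finiteness of this norm makes the integral well defined and equal to the limit'' by this tail-norm argument.
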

\begin{remark}
One might hope to get this representation result for H\"older continuous process of any order provided that $\mu/(\lambda\theta)\le 1$ (so that the restriction on $\rho$ is void). This, however, is possible only if $\theta = \mu/\lambda$, as it was explained in Remark~\ref{rem:exponents}.
\end{remark}
\begin{proof}
Let  $\set{t_n,n\ge 1}\in(0,1)$ be some sequence of points such that $t_n\uparrow 1$, $n\to\infty$. We will  construct an adapted process $\psi$ such that
\begin{itemize}
\item[$(\Phi1)$] For all $n$ large enough $\int_0^{t_n}\psi_s dX_s = Z_{t_{n-1}}$.
\item[$(\Phi2)$]  $\norm{\psi}_{\alpha,[t_n,1]}\to 0$, $n\to\infty$.
\end{itemize}
Since $Z_{t_n}\to Z_1$, $n\to\infty$, by continuity, these properties imply \eqref{eq:represent}.
%To facilitate reading, we divide the rest of the proof into several steps.
%\underline{Step 1. Construction}.

 Denote for $n\ge 1$ \ $\xi_n = Z_{t_{n}}$, $\Delta_n = t_{n+1}-t_n$, $\delta_n = |\xi_{n}-\xi_{n-1}|$.

We construct the process $\psi$ inductively on $[t_n,t_{n+1}]$. To this end, we take some positive sequences $\set{\sigma_n,n\ge 1}$ and $\set{\nu_n,n\ge 1}$ such that $\sigma_n\to\infty$, $n\to\infty$.

We start the construction setting $\psi_t = 0$ for $t\in[0,t_1]$.
Further, assume that $\psi$ is constructed on $[0,t_n)$ and denote $V_t = \int_0^t \psi_s dX_s$. The construction will depend on whether some event $A_n\in \mathcal F_{t_n}$, which will be specified later, or its complement $B_n=\Omega\setminus A_n$ holds.

Case 1:  $\omega \in A_n$. Thanks to Lemma~\ref{lemma:aux_general}, there exists a process $\set{\phi_t,t\in [t_n,t_{n+1}]}$ such that $\int_{t_n}^t \phi_s dX_s \to +\infty$, $t\to t_{n+1}-$.
Define $v_n = V_{t_n}-\xi_{n}$, $$\tau_n = \inf\set{t\ge t_n: \int_{t_n}^t \phi_s dX_s \ge |v_n|}$$
and set
$$
\psi_s = \phi_s \operatorname{sign}v_n\,\ind{[t_n,\tau_n]}(t),\  t\in[t_n,t_{n+1}].
$$
It is clear that $\int_{t_n}^{t_{n+1}} \phi_s dX_s = v_n$, hence, $V_{t_{n+1}} = V_{t_n} + v_n = \xi_n$.

Case 2: $\omega\in B_n$. Similarly to the proof of Lemma~\ref{lemma:aux_general}, define $g_n(x) = \sqrt{x^2 + \nu_n^2} - \nu_n$ so that $g_n\in C^\infty(\mathbb{R})$, $|x|\ge g_n(x)\ge (|x|-\nu_{n})\vee 0$. Introduce the stopping time
$$
\tau_n = \inf\set{t\ge t_n: \sigma_n g_n(X_{t}-X_{t_n})\ge \delta_n}\wedge t_{n+1}
$$
and set
$$
\psi_s = \sigma_n g'_n(X_t-X_{t_n}) \operatorname{sign}(\xi_n-\xi_{n-1})\ind{[t_n,\tau_n]}(t), t\in[t_n,t_{n+1}].
$$

By the change of variable formula for the generalized Lebesgue--Stieltjes integral,
$$V_{t_{n+1}}-V_{t_n} = \int_{t_n}^{t_{n+1}} \psi_s dX_s = \sigma_n g_n(X_{t_{n+1}\wedge \tau_n} - X_{t_n}).
$$
Therefore, $V_{t_{n+1}}-V_{t_n} = \xi_n -\xi_{n-1}$ provided that $\tau\le \tau_n$. In turn, thanks to the properties for $g_n$, the latter holds
if $\sigma_n\sup_{t\in[t_{n-1},t_n]}|X_{t}-X_{t_{n-1}}|\le \delta_n + \sigma_n\nu_n$.
In view of this, define
$$
A_{n+1} = B_n\cap \set{\sup_{t\in[t_{n-1},t_n]}|X_{t}-X_{t_{n-1}}|\le \delta_n\sigma_n^{-1} + \nu_n}, n\ge 1,
$$
and $A_1 = \Omega$.

%\underline{Step 2. }
Now we identify conditions under which this construction works. First note that for $(\Phi1)$ it is suffices to ensure that
\begin{equation}\label{eq:borelcantelli}
\Prob\left(\limsup_{n\to\infty} A_n\right)=0.
\end{equation}
Indeed, let $N = N(\omega) = \max\set{n\ge 1: \omega\in A_n}$. Then by construction, $V_{t_{N+1}}=\xi_N$. Moreover,  since $\omega\in B_n$ for all $n\ge N+1$, we have $V_{t_{n+1}}-V_{t_n}=\xi_n-\xi_{n-1}$ for all $n\ge N+1$, whence $(\Phi1)$ follows.

Now turn to $(\Phi2)$. Write for $n\ge N$
$$
\norm{\psi}_{\alpha, [t_n,1]} = I_1 + I_2,
$$
where
$$
I_1 =  \int_{t_n}^1\frac{\abs{\psi_t}}{(t-t_n)^\alpha} \ud s,\quad I_2  = \int_{t_n}^1\int_{t_n}^{t}\frac{\abs{\psi_t-\psi_s}}{(t-s)^{\alpha+1}}\ud s\,\ud t.
$$
%\int_{\tau_n}^1 \left(\frac{\abs{\psi_t}}{(t-\tau_n)^\alpha} + \int_{\tau_n}^{t}\frac{\abs{\psi_t-\psi_s}}{(t-s)^{\alpha+1}}\right)\ud s
Estimate, taking into account that $\abs{g'_k}\le 1$,
\begin{align*}
I_1 & =  \sum_{k=n}^{\infty} \int_{t_k}^{t_{k+1}}\frac{\abs{\psi_t}}{(t-t_n)^{\alpha}}\ud t \le \sum_{k=n}^{\infty} \int_{t_k}^{t_{k+1}} \frac{\sigma_k}{(t - t_{k})^{\alpha}} \le C\sum_{k=n}^{\infty} \sigma_k\Delta_k^{1-\alpha}.
\end{align*}
Further, denoting $\psi(t,s) = \abs{\psi_t-\psi_s}(t-s)^{-\alpha-1}$ and taking into account that $\psi_t = 0$ for $t\in(\tau_k,t_{k+1}]$, write
\begin{align*}
I_2 & = % \int_{\tau_n}^1\int_{\tau_n}^{t}\frac{\abs{\psi_t-\psi_s}}{(t-s)^{\alpha+1}}\ud s\,\ud t = \\
\sum_{k=n}^\infty \int_{t_k}^{\tau_{k}}  \int_{t_n}^{t} \psi(t,s)\ud s \,\ud t
+ \sum_{k=n}^\infty \int_{t_{k}}^{t_{k+1}}  \int_{t_n}^{t} \psi(t,s)\ud s \,\ud t\\
&  = \sum_{k=n}^\infty \int_{t_k}^{\tau_{k}}  \int_{t_n}^{t_k} \psi(t,s)\ud s \,\ud t
+ \sum_{k=n}^\infty \int_{\tau_{k}}^{t_{k+1}}  \int_{t_n}^{\tau_{k}} \psi(t,s)\ud s \,\ud t\\
& + \sum_{k=n}^\infty \int_{t_k}^{\tau_{k}}  \int_{t_k}^{t} \psi(t,s)\ud s \,\ud t
=: J_1 + J_2 + J_3.
\end{align*}
Estimate the terms separately:
\begin{align*}
&J_1\le 2\sum_{k=n}^{\infty} \sigma_k\int_{t_k}^{\tau_{k}}  \int_{t_n}^{t_k}\frac{\ud s\, \ud t}{(t-s)^{\alpha + 1}} \le C\sum_{k=n}^{\infty} \sigma_k\int_{t_k}^{t_{k+1}}\frac{\ud t}{(t-t_n)^{\alpha}}
 \le C \sum_{k=n}^{\infty}\sigma_k {\Delta}_k^{1-\alpha}.
\end{align*}
Similarly, $J_2  \le C \sum_{k=n}^{\infty}\sigma_k {\Delta}_k^{1-\alpha}$.
To estimate $J_3$, denote $r_k = \nu_k^{1/\theta}$ and
decompose
$$
J_3  = \sum_{k=n}^\infty\int_{t_k}^{\tau_{k}}\left(\int_{t_k}^{t-r_k}+\int_{t-r_k}^{t}\right)\psi(t,s)\ud s \,\ud t=:J_{31}+J_{32}.
$$
Then
\begin{align*}
J_{31} &\le 2\sum_{k=n}^\infty \sigma_k\int_{t_k}^{\tau_{k+1}}\int_{t_k}^{t-r_k}(t-s)^{-\alpha-1}\ud s\, \ud t\\&\le
 C\sum_{k=n}^\infty \sigma_k\int_{t_k}^{\tau_{k}}r_k^{-\alpha}\ud t = C\sum_{k=n}^{\infty}  \sigma_k \Delta^{\vphantom{H}}_k \nu_k^{-\alpha/\theta}.
\end{align*}
To estimate $J_{32}$, note that $\abs{g_k''(x)}\le \nu_k^{-1}$. Therefore,
\begin{align*}
&J_{32}\le \sum_{k=n}^{\infty}
\sigma_k \nu_k^{-1}\int_{t_k}^{\tau_{k}}  \int_{t-r_k}^{t}\frac{\abs{X_t-X_s}}{(t-s)^{\alpha+1}}\ud s \,\ud t\\
& \le C\sum_{k=n}^{\infty}\sigma_k \nu_k^{-1}\int_{t_k}^{\tau_{k}}  \int_{t-r_k}^{t}(t-s)^{\theta-\alpha-1}\ud s \,\ud t
\\&\le C \sum_{k=n}^{\infty} \sigma_k \nu_k^{-1} \Delta^{\vphantom{H}}_k r_k^{\theta-\alpha} = C\sum_{k=n}^{\infty}  \sigma_k^{\vphantom{H}} \Delta^{\vphantom{H}}_k \nu_k^{-\alpha/\theta}.
\end{align*}
Note that the estimation of the summand should be modified for $r_k>\Delta_k$, i.e.\ $\nu_k>\Delta_k^{\theta}$. In this case the summand is bounded by $\sigma_k \nu_k^{-1}\Delta_k^{\theta-\alpha + 1}<\sigma_k^{\vphantom{H}} \Delta_k^{\vphantom{H}} \nu_k^{-\alpha/\theta}$, which leads to the same estimate.

Summing up, we get that $\norm{\psi}_{\alpha, [t_n,1]}\to 0$, $n\to\infty$, iff
\begin{equation}\label{eq:series}
\sum_{n=1}^{\infty} \sigma_n^{\vphantom{H}}\Delta_n^{1-\alpha}<\infty\ \text{ and }\ \sum_{n=1}^{\infty}  \sigma_n^{\vphantom{H}} \Delta^{\vphantom{H}}_n \nu_n^{-\alpha/\theta}<\infty.
\end{equation}

Let us discuss the choice of parameters. First we need to ensure \eqref{eq:borelcantelli}. Suppose that $\kappa\in (0,\rho)$. Then the H\"older assumption $Z\in C^\rho$ implies that $\delta_n = o(\Delta_n^\kappa)$, $n\to\infty$ a.s. Setting $\nu_n = \Delta_n^\kappa\sigma_n^{-1}$,  we get that
$$
\limsup_{n\to\infty} A_n\subset \limsup_{n\to\infty} C_n,
$$
where
$$
C_{n} = \set{\sup_{t\in[t_{n-1},t_n]}|X_{t}-X_{t_{n-1}}|\le 2\Delta_n^\kappa\sigma_n^{-1}},\ n\ge 1.
$$
Then, but virtue of the Borel--Cantelli lemma, it suffices to ensure that
$\sum_{n=1}^{\infty} \Prob(C_n)<\infty$. In view of the small ball estimate $(S)$,
$$
\Prob (C_n) \le K_1\exp\set{-2^{-\mu}K_2 \Delta_n^{\mu-\kappa\lambda}\sigma_n^{\lambda}}.
$$
Taking $\sigma_n = n^{\epsilon}\Delta_n^{\kappa-\mu/\lambda}$ with $\epsilon>0$, we get $\sum_{n=1}^{\infty} \Prob(C_n)<\infty$, as required.

Now turn to \eqref{eq:series}. With the above choice of $\sigma_n$ and $\nu_n$,  they  transform to
$$
\sum_{n=1}^{\infty} n^{\epsilon}\Delta_n^{1+ \kappa - \mu/\lambda -\alpha}<\infty
$$
and
$$
\sum_{n=1}^{\infty}  n^{\tau} \Delta^{1+\kappa - \mu(1+\alpha/\theta)/\lambda}_n<\infty,
$$
where $\tau = \epsilon(1+\alpha/\theta)$. Taking $\Delta_n=2^{-n}$, it is enough to make the both exponents near $\Delta_n$ positive. Since $\mu/(\lambda\theta)\ge 1$, the second exponent is smaller, so we end up with the requirement that
\begin{equation*} %\label{eq:exponentsrestr}
1+\kappa - \mu(1+\alpha/\theta)/\lambda>0.
\end{equation*}
The other restrictions we have are $\kappa<\rho$ and $\alpha>1-\theta$.  So the choice of $\kappa$ and $\alpha$ is possible iff
$$
1+\rho - \mu(1+(1-\theta)/\theta)/\lambda>0,
$$
which is easily seen to be equivalent to $\rho> \mu/(\lambda\theta)-1$. The proof is now complete.
\end{proof}
\begin{remark}\label{rem:equivalence}
With our approach, the assumption that $Z$ is H\"older continuous is unavoidable. Indeed, in order for the argument to work, one must have $\delta_n \sigma_n^{-1} = O(\Delta_n^\theta)$, $n\to\infty$, as in the opposite case we would have a contradiction with $(H)$. On the other hand, the series $\sum_{n=1}^{\infty} \sigma_n \Delta_n^{1-\alpha}$ must converge. Consequently, $\sum_{n=1}^{\infty} \delta_n \Delta_n^{1-\theta-\alpha}<\infty$, whence $\delta_n = o(\Delta_n^{\theta+\alpha-1})$, $n\to\infty$, as claimed.
\end{remark}
\begin{remark}
Assumption $(S)$ is not optimal for establishing Theorem~\ref{thm:general}. It is easy to see that the conclusion is of ``probability zero'' spirit, in particular, it does not change with switching to an equivalent measure. Assumption $(S)$ is more delicate and  in general will not hold for an equivalent measure. It is possible to formulate a relevant ``almost sure'' assumption, for example: 
\begin{itemize}
\item[$(S')$] There exists a number $a>0$ such that for any sequence of points $\set{t_n,n\ge1}\in(0,1)$ such that $t_{n}\uparrow 1$, $n\to\infty$, $t_{n+1} - t_n\sim K n^{-\gamma}$ with some $K>0$, $\gamma>1$, it holds 
$$
\lim_{n\to\infty} n^{\gamma a}\sup_{t\in[t_{n-1},t_n]}|X_{t}-X_{t_{n-1}}|=+\infty.
$$
\end{itemize}
It is easy to check that if one assumes $(S')$ instead of $(S)$, then Theorem~\ref{thm:general} holds with $\rho_0 = a/\theta-1$. Also, similarly to the proof of \eqref{eq:borelcantelli}, $(S')$ implies $(S)$ with any $a>\mu/\lambda$.

However,  $(S')$ is not easy to check. Alternatively, one can assume that the distribution of $X$ is equivalent to that of a process satisfying $(S)$, which seems more natural. However, one needs to ensure that the adaptedness is preserved with the change of measure, e.g.\ by using some version of the Girsanov theorem.
\end{remark}

 \section{Small ball probability estimates and representation results for Gaussian processes}\label{sec2}

 In this section we first establish the small ball property for Gaussian processes satisfying two-sided estimates on the incremental variance  and preserving the sign of  incremental covariance. We remark that similar assumptions on the incremental variance were imposed in \cite{azmoo-vita}, however, the assumptions on the covariance differ significantly, so the findings are different. Using the small ball estimates, we derive representation results for such Gaussian processes. Finally we give the examples of the processes satisfying these conditions, including integral transforms with Volterra kernels of a Wiener process and of an fBm. 

 \subsection{Small ball property for Gaussian properties with  variance distance satisfying two-sided estimates}
 Let $X=\{X_t, t\in [0,1]\}$ be a centered Gaussian process on a finite interval $[0,1]$, whose variance distance $\E|X_t-X_s|^2$  satisfies the following two-sided power bounds:
 \begin{itemize}
 \item[$(A_1)$] There exist $H_1\in(0,1]$ and  $C_1>0$ such that for any $s,t\in[0,1]$
 $$ \E(X_t-X_s)^2\ge C_1|t-s|^{2H_1}. $$
 \item[$(A2)$] There exist $H_2\in(0,1]$ and  $C_2>0$ such that for any $s,t\in[0,1]$
 $$\E(X_t-X_s)^2\le C_2|t-s|^{2H_2}. $$
 \end{itemize}
Clearly, $H_1\ge H_2$.  In the particular case where $H_1=H_2$, such process is called a quasi-helix, see \cite{kahane,kahanebook}.

Furthermore, assume that the increments of $X$ are either positively or negatively correlated.  More precisely, we assume one of the following conditions:
 \begin{itemize}
 \item[$(B^{\pm})$] For any  $s_1,t_1,s_2,t_2\in[0,1]$, $ s_1\leq t_1\leq s_2\leq t_2$  $$\pm \E(X_{t_1}-X_{s_1})(X_{t_2}-X_{s_2})\geq 0.$$
  \end{itemize}
\begin{remark}\label{remplusminus}
It is worth to mention that $(A1)$ and $(B^+)$ imply that $H_1\ge 1/2$. Indeed, %take for simplicity  $\mathbb{T} = [0,1]$ and
write
for any $n\ge 1$
\begin{gather*}
C_2\ge \E(X_1-X_0)^2 = \sum_{k=1}^n \E(X_{k/n}-X_{(k-1)/n})^2\\ + \sum_{i\neq j} \E[(X_{i/n}-X_{(i-1)/n})(X_{j/n}-X_{(j-1)/n})]
\ge C_1 n^{1-2H_1},
\end{gather*}
whence  the claim follows by letting $n\to \infty$. Similarly, $(A2)$ and $(B^-)$ imply that $H_2\le 1/2$.
\end{remark}

Further, introduce some notations for different constants. More precisely, denote \begin{gather*}
 	C_0=\frac{64}{C_1},\
 C_3=\frac{C_0^{H_1/2}}{4^{H_1}},\ C_4=\left(16 C^2_2 C_0^{({H_2+1})/{2H_1}}\right)^{-1},\
 C_5 = \left(32 C^2_2 C_0^{({4H_2+1})/{2H_1}}\right)^{-1}.
% \\
% C_6= {C_1}(2C_2C_3^{\frac{2H_2-1}{2H_1}})^{-1}, C_7= {C_4^{\frac{2H_2-1}{2H_1(H_1-H_2)}}}{C_6^{\frac{1}{2(H_2-H_1)}}},\ C_8=\left(32C_2^2C_3^{\frac{4H_2+1}{2H_1}}\right)^{-1}.
%
\end{gather*}

The following theorem establishes an upper bound for  small deviations of the process $X$.
 \begin{theorem}\label{lemma1} Let $X=\{X_t, t\in [0,1]\}$ be a Gaussian process satisfying  $(A1)$ and $(A2)$.
 \begin{itemize}
  \item[$(1)$] If  $(B^+)$ holds, then for any
  $\varepsilon\in(0,C_3]$
 \begin{equation}\label{equa}
 \mathbf{P}\left\{\sup_{0\leq s\leq t\leq 1}|X_t-X_s|\leq \varepsilon\right\}\leq\exp\left\{-C_4\varepsilon^{4-(2H_2+2)/H_1}\right\}. \end{equation}
 \item[$(2)$] If $(B^-)$ holds, then for any $\varepsilon\in(0,C_3]$
 $$\mathbf{P}\left\{\sup_{0\leq s\leq t\leq 1}|X_t-X_s|\leq \varepsilon\right\}\leq\exp\left\{-C_5\varepsilon^{4-({4H_2+1})/{H_1}}\right\}.$$
\end{itemize}
 \end{theorem}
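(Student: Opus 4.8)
The plan is to discretize the interval, bound the small-ball event by a lower-tail large deviation for a quadratic form in the increments, and let the sign condition $(B^{\pm})$ enter only through a bound on $\mathrm{tr}\,\Sigma^2$, where $\Sigma$ is the increment covariance matrix.

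Fix $n\ge 1$, put $Y_k = X_{k/n}-X_{(k-1)/n}$, $k=1,\dots,n$, and let $\Sigma=\big(\E[Y_iY_j]\big)_{i,j=1}^n$. On the event in question every increment obeys $|Y_k|\le\varepsilon$, hence $S:=\sum_{k=1}^n Y_k^2\le n\varepsilon^2$, and it suffices to estimate $\Prob\{S\le n\varepsilon^2\}$. Diagonalizing $\Sigma$ one writes $S=\sum_i\lambda_i\chi_i^2$ with $\lambda_i\ge0$ and i.i.d.\ standard normal $\chi_i$, so that for $\theta>0$ one has $\Prob\{S\le a\}\le e^{\theta a}\prod_i(1+2\theta\lambda_i)^{-1/2}$. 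Using $\log(1+x)\ge x-x^2/2$ for $x\ge 0$ and optimizing in $\theta$ yields, for every $a\le m$,
\[
\Prob\{S\le a\}\le\exp\Big\{-\tfrac{(m-a)^2}{4v}\Big\},\qquad m:=\mathrm{tr}\,\Sigma=\E[S],\quad v:=\mathrm{tr}\,\Sigma^2 .
\]
This is the only probabilistic input, and it holds for all $\theta>0$, so no smallness restriction is incurred at this step.

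It remains to bound $m$ from below and $v$ from above. By $(A1)$, $m=\sum_k\E[Y_k^2]\ge C_1 n^{1-2H_1}$; choosing $n$ as large as the constraint $n\varepsilon^2\le\tfrac12 C_1 n^{1-2H_1}$ allows, that is $n\asymp\varepsilon^{-1/H_1}$, gives $a=n\varepsilon^2\le m/2$ and $m-a\ge\tfrac12 C_1 n^{1-2H_1}$. For $v$ I would split $\sum_{i,j}\Sigma_{ij}^2$ into diagonal and off-diagonal parts, using $|\Sigma_{ij}|\le C_2 n^{-2H_2}$ from $(A2)$ together with the identity $\E[(X_1-X_0)^2]=m+\sum_{i\ne j}\Sigma_{ij}$. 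Under $(B^+)$ the off-diagonal entries are nonnegative, whence $m\le\E[(X_1-X_0)^2]\le C_2$ and $\sum_{i\ne j}\Sigma_{ij}\le C_2$, giving $v\le 2C_2^2 n^{-2H_2}$; under $(B^-)$ they are nonpositive, whence $\sum_{i\ne j}|\Sigma_{ij}|=m-\E[(X_1-X_0)^2]\le m\le C_2 n^{1-2H_2}$, giving $v\le 2C_2^2 n^{1-4H_2}$. Substituting into the displayed bound yields $\Prob\{S\le n\varepsilon^2\}\le\exp\{-c\,n^{r_\pm}\}$ with $r_+=2-4H_1+2H_2$ and $r_-=1-4H_1+4H_2$, and the choice $n\asymp\varepsilon^{-1/H_1}$ turns these into the asserted exponents $\varepsilon^{4-(2H_2+2)/H_1}$ and $\varepsilon^{4-(4H_2+1)/H_1}$. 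The explicit constants $C_0,C_4,C_5$ and the range $\varepsilon\le C_3$ are exactly what makes the integer (floor) choice of $n$ legitimate while keeping $n\ge1$.

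The main obstacle is the estimation of $v=\mathrm{tr}\,\Sigma^2$. The two-sided variance bound alone only gives $|\Sigma_{ij}|\le C_2 n^{-2H_2}$ per entry, and summing this crudely over all $n^2$ entries produces no growing exponent in $n$. The gain comes from controlling the \emph{total} off-diagonal mass $\sum_{i\ne j}\Sigma_{ij}$ through the sign assumption together with the telescoping identity for $X_1-X_0$; this is precisely what separates the two exponents and is consistent with the constraints $H_1\ge1/2$ under $(B^+)$ and $H_2\le1/2$ under $(B^-)$ recorded in Remark~\ref{remplusminus}.
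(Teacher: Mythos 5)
Your proposal is correct and takes essentially the same approach as the paper: the paper invokes Theorem 4.4 of Li and Shao --- whose proof is precisely your Laplace-transform lower-tail bound for the quadratic form $\sum_i \xi_i^2$ --- with the same mesh $a\asymp\varepsilon^{1/H_1}$ dictated by $(A1)$, and it then bounds $\sum_{i,j}(\E(\xi_i\xi_j))^2$ by $\max_{i,j}\abs{\E(\xi_i\xi_j)}$ times the telescoped signed sum involving $\E(X_{a[a^{-1}]}-X_a)^2$, which is exactly your $\mathrm{tr}\,\Sigma^2$ estimate under $(B^{\pm})$. The only differences are that you re-derive the cited lemma rather than quote it, and that your explicit constants (and the admissible range of $\varepsilon$) come out slightly different from the paper's $C_3$, $C_4$, $C_5$, which is harmless since these constants are simply artifacts of the proof.
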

 \begin{remark}\label{remark 2.2}
 	The small ball estimates of Theorem~\ref{lemma1} are useful only whenever the exponents of $\varepsilon$ are negative. It is easy to see that in case (1) this happens if $H_2> 2H_1-1$, in case (2), if $H_2> H_1-1/4$. Recall also that in case (1), $H_1\ge 1/2$, in case (2), $H_2\le 1/2$; in both cases $H_2\le H_1$.
 \end{remark}

 \begin{proof}
 We use Theorem 4.4. from \cite{li-shao}. According to this result, for any
 any $a\in(0,1/2]$ and any $\varepsilon>0$ the inequality
 \begin{equation}\label{equat4} \mathbf{P}\left\{\sup_{0\leq s \leq t\leq 1}|X_t-X_s|\leq \varepsilon\right\}\leq\exp\left\{-\frac{\varepsilon^{4}}{16a^2\sum_{2\leq i,j\leq a^{-1}}(\E(\xi_i\xi_j))^2}\right\}
 \end{equation}
holds provided that
\begin{equation}\label{32eps2}
a\sum_{2\leq i\leq a^{-1}}\E\xi_i^2\geq 32\varepsilon^2,
\end{equation}
where $\xi_i=X_{ia}-X_{(i-1)a}$. Thanks to  $(A1)$ and $(A2)$,
 \begin{equation}\label{equat5}
 C_1 a^{2H_1}\leq \E\xi^2_i\leq C_2 a^{2H_2}.
 \end{equation}
Therefore,  $a\sum_{2\leq i\leq a^{-1}}\E\xi_i^2\geq C_1 a\left(\left[a^{-1}\right]-1\right) a^{2H_1}\geq C_1 \left(1-2a\right)a^{2H_1}$.
If $a\le\frac14$, inequality \eqref{32eps2}  holds whenever $a^{2H_1}\geq 64{C_1}^{-1}\varepsilon^2=C_0\varepsilon^2.$  Thus, we get the estimate \eqref{equat4} by setting $a = C_0^{1/2H_1}\varepsilon^{1/H_1}$ (the assumption $\varepsilon\le C_3$ entails that $a\le 1/4$).

(1) If $(B^+)$ holds, then
 \begin{equation}\begin{gathered}\label{equat2}
 \sum_{2\leq i,j\leq a^{-1}}(\E(\xi_i\xi_j))^2\leq \max_{2\le i,j\le a^{-1}}|\E(\xi_i\xi_j)| \sum_{2\leq i,j\leq a^{-1}} \E \xi_i \xi_j\\ \leq  C_2 \max_{2\le i\le a^{-1}}\E \xi_i^2 \ \E \bigg(\sum_{2\le i\le  a^{-1}}\xi_i\bigg)^2 = C_2 a^{2H_2} \E (X_{a[a^{-1}]}-X_{a})^2\le C^2_2 a^{2H_2}.
 \end{gathered}\end{equation}
 Substituting \eqref{equat2} into \eqref{equat4}, we arrive at the desired estimate.

(2) If  $(B^-)$ holds, then  \begin{equation}\begin{gathered}\label{equat6}\sum_{2\leq i,j\leq  a^{-1}}\big(\E(\xi_i\xi_j)\big)^2
\leq \max_{2\leq i,j\leq  a^{-1}}|\E(\xi_i\xi_j)| \left(\sum_{2\leq i\leq  a^{-1}} \E \xi_i^2 -2\sum_{2\leq i<j\leq  a^{-1}} \E \xi_i \xi_j\right)\\
\le C_2 a^{2H_2} \left(2\sum_{2\leq i\leq  a^{-1}} \E \xi_i^2-\sum_{2\leq i,j\leq  a^{-1}} \E \xi_i \xi_j\right)\\
=C_2 a^{2H_2}\left(2\sum_{2\leq i\leq  a^{-1}} \E \xi_i^2-\E(X_{a[a^{-1}]} -X_{a})^2\right)
\le 2C_2^2 a^{4H_2-1}.
 \end{gathered}\end{equation}
Plugging \eqref{equat6} into \eqref{equat4} and recalling that $a = C_0^{1/2H_1}\varepsilon^{1/H_1}$, we get the desired estimate.
 \end{proof}

As a corollary, we establish a small ball property on any interval.
\begin{proposition}\label{smallballprop}
Let $X=\{X_t, t\in [t_0,t_0+\Delta]\}$ be a Gaussian process satisfying  $(A1)$ and $(A2)$.
\begin{itemize}
	\item[$(1)$] If  $(B^+)$ holds, then for any
	$\varepsilon\in(0,C_3\Delta^{H_1}]$
	$$\mathbf{P}\left\{\sup_{t_0\leq s\leq t\leq t_0+\Delta}|X_t-X_s|\leq \varepsilon\right\}\leq\exp\left\{-C_4\varepsilon^{4-(2H_2+2)/H_1}\Delta^{2-2H_2}\right\}. $$
	\item[$(2)$] If $(B^-)$ holds, then for any $\varepsilon\in(0,C_3\Delta^{H_1}]$
	$$\mathbf{P}\left\{\sup_{t_0\leq s\leq t\leq t_0+\Delta}|X_t-X_s|\leq \varepsilon\right\}\leq\exp\left\{-C_5\varepsilon^{4-({4H_2+1})/{H_1}}\Delta\right\}.$$
\end{itemize}
\end{proposition}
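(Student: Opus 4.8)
The plan is to reduce the estimate on $[t_0,t_0+\Delta]$ to the unit-interval case of Theorem~\ref{lemma1} by a deterministic time change. I set $Y_u = X_{t_0+\Delta u}$, $u\in[0,1]$. Then $Y$ is again a centered Gaussian process, the two suprema coincide,
\[
\sup_{t_0\le s\le t\le t_0+\Delta}\abs{X_t-X_s} = \sup_{0\le u\le v\le 1}\abs{Y_v-Y_u},
\]
and $(A1)$, $(A2)$ give $\E(Y_v-Y_u)^2\ge C_1\Delta^{2H_1}\abs{v-u}^{2H_1}$ and $\E(Y_v-Y_u)^2\le C_2\Delta^{2H_2}\abs{v-u}^{2H_2}$, while the time change preserves the ordering of points, so $Y$ inherits the sign condition $(B^\pm)$.

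Since in general $H_1\ge H_2$, no single rescaling of the values of $Y$ can simultaneously normalize the lower constant $C_1\Delta^{2H_1}$ and the upper constant $C_2\Delta^{2H_2}$ back to $C_1$ and $C_2$. Consequently I would not invoke Theorem~\ref{lemma1} as a black box, but instead rerun its proof for $Y$, carrying the factor $\Delta$ through. Writing $\eta_i=Y_{ia}-Y_{(i-1)a}$, the lower bound yields $a\sum_{2\le i\le a^{-1}}\E\eta_i^2\ge \tfrac12 C_1\Delta^{2H_1}a^{2H_1}$ for $a\le 1/4$, so the admissibility condition \eqref{32eps2} for the Li--Shao estimate \eqref{equat4} holds as soon as $a^{2H_1}\ge C_0\varepsilon^2\Delta^{-2H_1}$. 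I would therefore take $a=C_0^{1/2H_1}\varepsilon^{1/H_1}\Delta^{-1}$, and then the requirement $a\le 1/4$ is exactly the hypothesis $\varepsilon\le C_3\Delta^{H_1}$.

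It then remains to bound the covariance sum $\sum_{2\le i,j\le a^{-1}}(\E(\eta_i\eta_j))^2$ in \eqref{equat4}, which proceeds verbatim as in \eqref{equat2} for $(B^+)$ and in \eqref{equat6} for $(B^-)$, the only change being that each use of the variance bound now carries $\Delta^{2H_2}$: one uses $\E\eta_i^2\le C_2\Delta^{2H_2}a^{2H_2}$ and $\E(Y_{a[a^{-1}]}-Y_a)^2\le C_2\Delta^{2H_2}$. This gives $\sum_{i,j}(\E\eta_i\eta_j)^2\le C_2^2\Delta^{4H_2}a^{2H_2}$ in case $(B^+)$ and $\le 2C_2^2\Delta^{4H_2}a^{4H_2-1}$ in case $(B^-)$. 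Substituting these into \eqref{equat4}, the denominator of the exponent becomes proportional to $\Delta^{4H_2}a^{2H_2+2}$ in case (1) and to $\Delta^{4H_2}a^{4H_2+1}$ in case (2); inserting the chosen $a\propto\Delta^{-1}$ collects the powers of $\Delta$ as $\Delta^{2H_2-2}$ and $\Delta^{-1}$ respectively, so that the exponent acquires the factors $\Delta^{2-2H_2}$ and $\Delta$, reproducing the two claimed bounds. Setting $\Delta=1$ returns Theorem~\ref{lemma1} exactly, so the constants are the same $C_4$, $C_5$.

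The argument is essentially routine once the reduction is in place; the only point demanding care is the bookkeeping of the powers of $\Delta$ and the verification that the chosen $a$ makes the threshold $\varepsilon\le C_3\Delta^{H_1}$ coincide precisely with $a\le 1/4$, ensuring that \eqref{equat4} is admissible on the whole stated range of $\varepsilon$. I expect no genuine obstacle beyond this, since the sign-based covariance estimates \eqref{equat2} and \eqref{equat6} are insensitive to the time change.
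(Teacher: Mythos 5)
Your proof is correct: the time change, the choice $a = C_0^{1/(2H_1)}\eps^{1/H_1}\Delta^{-1}$, and the bookkeeping of powers of $\Delta$ in the two covariance bounds do reproduce exactly the claimed factors $\Delta^{2-2H_2}$ and $\Delta$, on exactly the stated range of $\eps$. Where you part ways with the paper is the claim that Theorem~\ref{lemma1} cannot be invoked as a black box. The paper rescales values as well as time, setting $X'_t = \Delta^{-H_1}(X_{t_0+\Delta t}-X_{t_0})$: this normalizes the \emph{lower} bound, so $X'$ satisfies $(A1)$ with the original constant $C_1$, while $(A2)$ holds with the modified constant $C_2' = C_2\Delta^{2(H_2-H_1)}$. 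You are right that no value rescaling normalizes both constants simultaneously, but that is not needed: Theorem~\ref{lemma1} holds for arbitrary constants, its admissible range $(0,C_3]$ depends only on $C_1$ (through $C_0$), and $C_4$, $C_5$ scale as $C_2^{-2}$. Applying it to $X'$ with $\eps'=\eps\Delta^{-H_1}$ gives $C_4' = C_4\Delta^{4(H_1-H_2)}$ and
\[
C_4'\,(\eps')^{4-(2H_2+2)/H_1} = C_4\,\eps^{4-(2H_2+2)/H_1}\,\Delta^{2-2H_2},
\]
and similarly with $C_5$ in case (2); that computation is the paper's entire proof. So the two routes rest on the same scaling idea and the same Li--Shao input: the paper's version buys brevity by isolating the single constant that changes, while yours buys self-containedness, rerunning the proof of Theorem~\ref{lemma1} with $\Delta$ carried through and thereby re-deriving the constants explicitly (a useful cross-check, since the constant-dependence must be tracked in either route). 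Both arguments are valid and yield the same bounds.
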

\begin{proof}
	Define $X'_t = \Delta^{-H_1} (X_{t_0+\Delta t}-X_{t_0})$, $t\in[0,1]$. Then $X'$  satisfies $(A1)$ on $[0,1]$. It also satisfies (A2), but with a different constant, namely, $C_2' = C_2 \Delta^{2(H_2-H_1)}$. Setting $\eps' = \eps \Delta^{-H_1}$ and applying Theorem~\ref{lemma1}, we arrive at the required statement.
\end{proof}

\section{Representation results for Gaussian processes}\label{sec4}
Now we apply the obtained representation results to processes from Section~\ref{equat1}. We could omit the following auxiliary result and derive the required results directly from Theorem~\ref{thm:general}. However, we give it not only for the sake of completeness, but also to identify relation between assumptions $(H)$, $(S)$ and those from Section~\ref{sec2}.
\begin{lemma}
Assume that an adapted Gaussian process $X$ satisfies conditions $(A1)$, $(A2)$, $(B^+)$ with $0<2H_1-1< H_2\le H_1$.
%, moreover,
%\begin{equation}\label{equat7}
%1-3H_1+H_2+H_1H_2>0.
%\end{equation}
Then there exists an adapted process $\phi$ such that $\norm{\phi}_{\alpha,t}<\infty$ for every $t<1$ and
\begin{equation*}
\lim_{t\to 1-} \int_0^t \phi_s\ud X_s = +\infty
\end{equation*}
almost surely.
\end{lemma}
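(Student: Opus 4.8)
The plan is to deduce this lemma from the abstract Lemma~\ref{lemma12} by verifying that a centered Gaussian process satisfying $(A1)$, $(A2)$, $(B^+)$ with $0<2H_1-1<H_2\le H_1$ fulfills the hypotheses $(H)$ and $(S)$ of Section~\ref{sec3}. Since $X$ is assumed adapted and the integrand produced by Lemma~\ref{lemma12} is built from $X$ alone (through the increments $X_s-X_{t_{k-1}}$ and stopping times defined from $X$), that integrand is automatically adapted, so the conclusion transfers verbatim once $(H)$ and $(S)$ are in place.

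First I would check $(H)$. As $X$ is centered Gaussian, $(A2)$ gives $\E|X_t-X_s|^{2m}=(2m-1)!!\big(\E|X_t-X_s|^2\big)^m\le (2m-1)!!\,C_2^m|t-s|^{2mH_2}$ for every $m\ge1$, so the Kolmogorov--Chentsov criterion produces a modification with $X\in C^\theta[0,1]$ a.s.\ for every $\theta<H_2$. Choosing any $\theta\in(1/2,H_2)$ yields $(H)$; this is the step that calls for $H_2>1/2$, while $H_1<1$ (forced by $2H_1-1<H_2\le H_1$) ensures $\theta<1$ as well, so that the interval $(1-\theta,1/2)$ for $\alpha$ is nonempty.

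Next I would derive $(S)$ from Proposition~\ref{smallballprop}(1). The one mismatch is that the proposition bounds the oscillation $O_\Delta:=\sup_{s\le u\le v\le s+\Delta}|X_v-X_u|$, whereas $(S)$ concerns the one-sided modulus $M_\Delta:=\sup_{s\le t\le s+\Delta}|X_t-X_s|$. The triangle inequality gives $M_\Delta\le O_\Delta\le 2M_\Delta$, whence $\{M_\Delta\le\eps\}\subset\{O_\Delta\le 2\eps\}$, so for $2\eps\le C_3\Delta^{H_1}$,
$$
\mathbf{P}\{M_\Delta\le\eps\}\le\mathbf{P}\{O_\Delta\le 2\eps\}\le\exp\big\{-C_4\,(2\eps)^{4-(2H_2+2)/H_1}\,\Delta^{2-2H_2}\big\}.
$$
This is exactly $(S)$ with $\lambda=(2H_2+2)/H_1-4$, $\mu=2-2H_2$, $K_1=1$, $K_2=C_4\,2^{-\lambda}$. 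Here $\lambda>0$ precisely because $H_2>2H_1-1$ (the ``useful'' regime of Remark~\ref{remark 2.2}), and $\mu>0$ because $H_2\le H_1<1$; hence $\lambda,\mu,K_1,K_2>0$ as required.

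Finally I would invoke Lemma~\ref{lemma12}. The only caveat is that the small-ball bound above is valid only in the range $2\eps\le C_3\Delta^{H_1}$ rather than for all $\eps>0$. This is harmless, because in the proof of Lemma~\ref{lemma12} the estimate $(S)$ is applied solely with $\eps=k^{-\beta}$ and $\Delta=\Delta_k\asymp k^{-\gamma}$; since one is free to take $\beta$ arbitrarily large subject to $1<\gamma<\beta\lambda/\mu$ and $\gamma<1/\theta$, the admissibility constraint $\eps\lesssim\Delta^{H_1}$, i.e.\ $\beta\ge\gamma H_1$ for large $k$, can be satisfied simultaneously with all the other requirements. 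I expect the main (though essentially routine) obstacle to be this exponent bookkeeping—reconciling the restricted range of validity of the small-ball estimate with the parameter choices of Lemma~\ref{lemma12} and confirming that the inequalities among $H_1,H_2,\theta,\gamma,\beta$ are jointly solvable. Beyond quoting the earlier results, the genuinely new ingredient is the passage from the two-sided oscillation controlled in Proposition~\ref{smallballprop} to the one-sided modulus appearing in $(S)$.
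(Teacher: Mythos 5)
Your proposal is correct and takes essentially the same route as the paper: establish $(H)$ from $(A2)$ via Gaussian regularity, obtain $(S)$ from Proposition~\ref{smallballprop}(1) with $\lambda=(2H_2+2)/H_1-4>0$ (using $H_2>2H_1-1$) and $\mu=2-2H_2$, and conclude by Lemma~\ref{lemma12}. The only divergence is minor bookkeeping: where the paper upgrades the restricted small-ball estimate to all $\eps>0$ by enlarging the constant $K_1$ (using $H_1\le \mu/\lambda$), you instead re-enter the proof of Lemma~\ref{lemma12} and verify that only the admissible range $\eps\lesssim\Delta^{H_1}$ is ever invoked there (which indeed holds automatically, since $\gamma<\beta\lambda/\mu$ and $\mu/\lambda\ge H_1$ give $\gamma H_1<\beta$); both fixes are valid, and your explicit factor-of-two passage from the two-sided oscillation of Proposition~\ref{smallballprop} to the one-sided modulus in $(S)$ makes precise a step the paper glosses over.
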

\begin{remark}
It is not possible to state similar results for processes satisfying $(A1)$, $(A2)$ and $(B^{-})$, since $(H)$ with $\theta>1/2$ requires that $H_2>1/2$.
\end{remark}
\begin{proof}
From $(A2)$ it follows that $X$ is H\"older continuous of any order less than $H_2$, so $(H)$ holds.  Proposition~\ref{smallballprop} (1) yields
\begin{equation}\label{eq:smallballrem}
\mathbf{P}\left\{\sup_{s\leq t\leq s+\Delta}|X_t-X_s|\leq  \varepsilon\right\}\leq\exp\left\{-C_4\varepsilon^{-\lambda}\Delta^{\mu}\right\}.
\end{equation}
with $\lambda=(2H_2+2)/H_1 - 4  = 2(H_2+1-2H_1)>0$, $\mu = 2-2H_2>0$ for all $\eps\in (0,C_3 \Delta^{H_1}]$.  Setting $K_1 = \exp\set{C_4 C_3^{-\lambda}}$, $K_2 = C_4$,   we obtain \eqref{smallballassumption}: for $\eps\in (0,C_3 \Delta^{\mu/\lambda}]$ use \eqref{eq:smallballrem} and the observation that $H_1 \le \mu/\lambda$, for $\eps>0$, the left-hand side exceeds $1$. Thus, the statement follows from  Lemma~\ref{lemma:aux_general}.
\end{proof}

The following result is a consequence of Theorem~\ref{equat1}.
\begin{thm}\label{cor:main}
Assume that an adapted Gaussian process $X$ satisfies conditions $(A1)$, $(A2)$, $(B^+)$ with $0<2H_1-1< H_2\le H_1$. Let also a random variable $\xi$ be such that $\xi=Z_1$ for some adapted process $Z\in C^\rho[0,1]$ with  $\rho > \rho_0$, where
\begin{equation}\label{rho0}
\rho_0 = \frac{(1+H_2)(H_1-H_2)}{H_2+1-2H_1}.
\end{equation}
Then there exists an adapted process $\psi$  such that $\norm{\psi}_{\alpha,1}<\infty$ for some $\alpha\in (1-H_2,1/2)$ and
\begin{equation*}
\int_0^1 \psi_s\ud X_s = \xi
\end{equation*}
almost surely.
\end{thm}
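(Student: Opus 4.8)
The plan is to deduce Theorem~\ref{cor:main} from the general representation result, Theorem~\ref{thm:general}, by verifying that the hypotheses $(A1)$, $(A2)$, $(B^+)$ supply the abstract conditions $(H)$ and $(S)$ with explicit exponents, and then by computing that the abstract threshold $\mu/(\lambda\theta)-1$ reduces to $\rho_0$ in \eqref{rho0}. First I would record, exactly as in the preceding Lemma, that $(A2)$ forces $X$ to be H\"older continuous of every order strictly below $H_2$, so $(H)$ holds with any $\theta<H_2$; and that Proposition~\ref{smallballprop}(1), after the constant-adjustment argument given there, yields the small ball estimate $(S)$ with
\begin{equation*}
\lambda = \frac{2H_2+2}{H_1}-4 = 2(H_2+1-2H_1), \qquad \mu = 2-2H_2.
\end{equation*}
The standing assumption $2H_1-1<H_2$ is precisely what makes $\lambda>0$, so $(S)$ is nontrivial.

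Next I would feed these exponents into the conclusion of Theorem~\ref{thm:general}. That theorem guarantees the representation once $\rho>\mu/(\lambda\theta)-1$. The only delicate point is that $\theta$ may be taken arbitrarily close to, but not equal to, $H_2$, so the effective threshold is $\inf_{\theta<H_2}\bigl(\mu/(\lambda\theta)-1\bigr)$, which by monotonicity in $\theta$ equals the limiting value at $\theta=H_2$. Substituting $\theta=H_2$ gives
\begin{equation*}
\frac{\mu}{\lambda\theta}-1 = \frac{2-2H_2}{2(H_2+1-2H_1)\,H_2}-1
= \frac{(1-H_2)-(H_2+1-2H_1)H_2}{(H_2+1-2H_1)H_2},
\end{equation*}
and a short factorization of the numerator, which rearranges to $(1+H_2)(H_1-H_2)$, recovers exactly $\rho_0$ from \eqref{rho0}. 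Thus for any $\rho>\rho_0$ one can choose $\theta<H_2$ close enough to $H_2$ so that $\rho>\mu/(\lambda\theta)-1$ still holds, and the admissible range $\alpha\in(1-\theta,1/2)\subset(1-H_2,1/2)$ is then nonempty since $\theta>1/2$; note $\theta>1/2$ is available because $H_2>1/2$, which follows from $2H_1-1<H_2$ together with $H_1\ge 1/2$ (guaranteed by Remark~\ref{remplusminus}).

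With $(H)$, $(S)$, and the threshold verified, the representation $\int_0^1\psi_s\,dX_s=\xi$ with $\norm{\psi}_{\alpha,1}<\infty$ follows immediately by applying Theorem~\ref{thm:general}. The main obstacle is bookkeeping rather than analysis: one must handle the strict-versus-non-strict inequality in $\theta<H_2$ carefully, ensuring that the algebraic identity collapsing $\mu/(\lambda\theta)-1$ to $\rho_0$ is evaluated at the endpoint $\theta=H_2$ while the actual application uses a slightly smaller $\theta$, and then confirm that the gap between $\rho$ and $\rho_0$ absorbs this slack. I expect the numerator factorization,
\begin{equation*}
(1-H_2)-(H_2+1-2H_1)H_2 = (1+H_2)(H_1-H_2),
\end{equation*}
to be the one computation worth displaying explicitly, since it is the sole nonobvious step linking the abstract exponent to the stated form of $\rho_0$.
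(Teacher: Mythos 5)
Your route is the same as the paper's: the paper derives this theorem by citing Theorem~\ref{thm:general}, with $(H)$ and $(S)$ supplied by Proposition~\ref{smallballprop}(1) exactly as in the lemma immediately preceding the statement in Section~\ref{sec4}, and with $\theta$ taken arbitrarily close to $H_2$. The problem is that the algebra which is supposed to collapse the abstract threshold $\mu/(\lambda\theta)-1$ to $\rho_0$ is wrong at three consecutive points, and the errors do not cancel. First, Proposition~\ref{smallballprop}(1) gives $\lambda=(2H_2+2)/H_1-4=2(H_2+1-2H_1)/H_1$; your simplification $\lambda=2(H_2+1-2H_1)$ (inherited from the same slip in the paper's lemma) drops the factor $1/H_1$ and is false unless $H_1=1$. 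Second, your displayed factorization is false: a direct expansion gives
$$\bigl[(1-H_2)-(H_2+1-2H_1)H_2\bigr]-(1+H_2)(H_1-H_2)=(1-H_1)(1-H_2),$$
which is strictly positive when $H_1,H_2<1$. Third, even if that factorization were correct, the resulting expression would be $(1+H_2)(H_1-H_2)/\bigl[(H_2+1-2H_1)H_2\bigr]$, which is \emph{not} $\rho_0$: it carries an extra factor $H_2$ in the denominator, and you discard it without comment when you claim the computation ``recovers exactly'' \eqref{rho0}.

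If you redo the computation with the correct $\lambda$, the numerator genuinely does factor---this is where $(1+H_2)(H_1-H_2)$ comes from:
$$\frac{\mu}{\lambda H_2}-1=\frac{(1-H_2)H_1-(H_2+1-2H_1)H_2}{(H_2+1-2H_1)H_2}=\frac{(1+H_2)(H_1-H_2)}{(H_2+1-2H_1)H_2}=\frac{\rho_0}{H_2}.$$
So the route through Theorem~\ref{thm:general} proves the representation only for $\rho>\rho_0/H_2$, a strictly smaller range than $\rho>\rho_0$ whenever $H_2<H_1$ and $H_2<1$. In other words, executed correctly, your argument (which is the paper's intended argument---the paper itself supplies no computation) yields a weaker statement than the one claimed; the mismatch points to either a typo in \eqref{rho0} (a missing $H_2$ in the denominator) or the need for an additional idea, and your proposal conceals this gap behind the two algebra errors instead of closing it. A smaller but genuine additional flaw: $H_2>1/2$ does \emph{not} follow from $0<2H_1-1<H_2$ and $H_1\ge 1/2$ (take $H_1=0.6$, $H_2=0.3$); it is an extra implicit hypothesis, needed both for $(H)$ with $\theta>1/2$ and for the interval $(1-H_2,1/2)$ to be nonempty, so your justification of $\theta>1/2$ is also incorrect.
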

\begin{remark}
Consider the case where $H_1=H_2$. The processes $X$ satisfying $(A1)$ and $(A2)$ are called quasi-helices, see \cite{kahane,kahanebook}.  For such processes  $\rho_0=0$, so all values of $\rho$ are possible, which agrees with the results of \cite{mish-shev-valk,vita}.
\end{remark}
\begin{remark}
It is natural to study the representation question in the case where $\xi=Z_1$, and the process $Z$ has the same regularity as $X$. In the general case this translates to the requirement that $\theta(1-\theta)>\mu/\lambda$. In the particular case of Corollary \ref{cor:main} this translates to the inequality
$$
H_1 < \frac{2H_2(H_2+1)}{1+3H_2}.
$$
A simpler sufficient condition for this is that $H_1\le 4H_2/5 - 1/5$.
\end{remark}

\subsection{Examples}
Here we present some examples of processes which satisfy the assumptions $(A1)$, $(A2)$, $(B^+)$ so that the representation result of Theorem~\ref{cor:main} is true. We remark that it is enough to require the properties to hold on a subinterval $[t_0,1]$. 
\subsubsection{Subfractional Brownian motion}
Recall the definition of subfractional Brownian motion: this is a centered Gaussian process $G^H = \set{G^H_t,t\ge 0}$ with the covariance function 
$$
\E G^H_t G^H_u = t^{2H} + s^{2H} -\frac{1}{2}\left((t+s)^{2H} + \abs{t-s}^{2H}\right),
$$
where $H\in(0,1)$ is the self-similarity parameter of $G^H$, a counterpart of the Hurst parameter of fractional Brownian motion. It is easy to check that the increments of $G^H$ are not stationary. 

It is proved in  \cite{bojdecki} that $G^H$ satisfies properties $(A1)$, $(A2)$ with $H_1=H_2=H$, so $G^H$ is a quasi-helix. It also satisfies  $(B^+)$ for $H\in (1/2,1)$, and $(B^-)$ for $H\in(0,1/2)$. Consequently, the conclusion of Theorem~\ref{cor:main} holds for $G^H$ with any $\rho>0$.

\subsubsection{Bifractional Brownian motion}
The bifractional Brownian motion is a centered Gaussian process $B^{H,K} = \set{B^{H,K}_t,t\ge 0}$ with the covariance function
$$
R(t,s) = \E B^{H,K}_t B^{H,K}_u = \frac{1}{2^K}\left( \left(t^{2H} + s^{2H}\right)^K - \abs{t-s}^{2HK}\right),
$$
where $H\in(0,1)$, $K\in(0,1]$. This is an $HK$-self-similar process with non-stationary increments, which is also a quasi-helix, that is, it satisfies $(A1)$ and $(A2)$ with $H_1=H_2=HK$ (see \cite{houdre}).

Concerning $(B^+)$, assume that $HK>1/2$ and write
\begin{gather*}
\frac{\partial R(t,s)}{\partial t\partial s}  = \frac{2HK}{2^K}\left(2H(K-1)t^{2H-1}s^{2H-1}\left(t^{2H}+s^{2H}\right)^{K-1} + (2HK-1)\abs{t-s}^{2HK-2} \right) \\
= C_1 s^{2HK-2}\left( |u-1|^{2HK-2} - C_2 u^{2H-1}(u^{2H}+1)\right) \sim C_1 s^{2HK-2} |u-1|^{2HK-2}, u\to 1.
\end{gather*}
where $u=t/s$ and $C_1,C_2$ are some positive constants. Hence, $(B^+)$ holds on some interval $[t_0,1]$. As a result, we have Theorem~\ref{cor:main}  for $B^{H,K}$ with any $\rho>0$.

\subsubsection{Volterra integral transform of Wiener process}

Let $W=\{W(t), t\geq 0\}$ be a standard Wiener process. Consider  the processes of the form $X(t) = \int_0^t K(t,s) d W(s)$ with non-random kernels $K=\{K(t,s):[0,1]^2\rightarrow \mathbb{R}\}$ such that $K(t,\cdot)\in L^2{[0,t]}$ for any $t\in [0,1]$. Our goal is to establish the conditions on the kernel $K$ that supply $(A1)$, $(A2)$, and $(B^+)$, so that small ball property of Proposition \ref{smallballprop} is in place but only on the intervals separated from $0$. In what follows the constant $r\in[0,1/2)$ is fixed.

\begin{theorem}
\label{Th4.1}
Let  the kernel $K=\{K(t,s), t,s \in[0,1]\}$   satisfy   conditions
\begin{itemize}
\item[ $(B1)$] The kernel $K$ is non-negative on $[0,1]^2$  and for any $s\in [0,1]$   $K(\cdot,s)$ is non-decreasing in the first argument.
\item[$(B2)$]  There exist constants   $D_i>0, i=2,3$ and   $1/2<H_2<1$    such that   $$|K(t_2,s) - K(t_1,s)| \leq D_2 |t_2-t_1|^{H_2 }s^{-r},\;s,\; t_1,\;t_2 \in [0,1] $$
      and  $$\ K(t,s)\leq D_3(t-s)^{H_2-1/2}s^{-r},$$
\end{itemize}
      and at least one of the following conditions
\begin{itemize}
    \item[$(B3,a)$]  There exist constants  $D_1>0$ and $H_1\geq H_2 $   such that $$D_1|t_2-t_1|^{H_1}s^{-r}\leq|K(t_2,s) - K(t_1,s)|,\;s,\; t_1,\;t_2 \in [0,1];$$
     \item[$(B3,b)$]There exist constants  $D_1>0$ and $H_1\geq H_2 $   such that $$K(t,s)\geq D_1(t-s)^{H_1-1/2}s^{-r},\;s,\; t_1,\;t_2 \in [0,1].$$
\end{itemize} 
Then the Gaussian  process  $X(t) = \int_0^t K(t,s) dW(s)$, satisfies  conditions $(A1)$, $(A2)$, and $(B^+)$ on any subinterval $[1-\delta, 1]$ for $0<\delta<1$
  with powers $H_1, H_2$. % Moreover, if $H_2> 2H_1-1$, then the exponent  of $\varepsilon$ in small ball bound \eqref{equa} os negative (see Remark \ref{remark 2.2}).
\end{theorem}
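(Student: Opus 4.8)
The plan is to exploit the Wiener integral structure of $X$ together with the It\^o isometry. For $s<t$ I would first split the increment according to the two time regions where the kernel behaves differently,
$$
X_t-X_s = \int_0^s\big(K(t,u)-K(s,u)\big)\,\ud W(u) + \int_s^t K(t,u)\,\ud W(u),
$$
so that, the two stochastic integrals being supported on disjoint intervals and hence uncorrelated,
$$
\E(X_t-X_s)^2 = \int_0^s\big(K(t,u)-K(s,u)\big)^2\,\ud u + \int_s^t K(t,u)^2\,\ud u. \qquad (\star)
$$
Everything then reduces to estimating the two deterministic integrals in $(\star)$ under the kernel hypotheses.

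For $(A2)$ I would bound both terms of $(\star)$ from above. The first integrand is controlled by the first bound in $(B2)$, giving $D_2^2|t-s|^{2H_2}\int_0^s u^{-2r}\,\ud u$; since $r<1/2$ the weight $u^{-2r}$ is integrable at the origin and $\int_0^s u^{-2r}\,\ud u\le (1-2r)^{-1}$. The second integrand is controlled by the second bound in $(B2)$: on $[1-\delta,1]$ one has $u^{-2r}\le(1-\delta)^{-2r}$, while $\int_s^t(t-u)^{2H_2-1}\,\ud u = (t-s)^{2H_2}/(2H_2)$ because $H_2>1/2$. Summing yields $(A2)$ with a constant depending on $D_2,D_3,r,\delta,H_2$. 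For $(A1)$ I would keep only one of the two non-negative terms in $(\star)$, depending on which lower bound is assumed. Under $(B3,a)$ I discard the second term and bound the first below by $D_1^2|t-s|^{2H_1}\int_0^s u^{-2r}\,\ud u$, using $s\ge 1-\delta$ to bound the integral below by $(1-\delta)^{1-2r}/(1-2r)$. Under $(B3,b)$ I instead discard the first term and bound the second below, using $u^{-2r}\ge 1$ on $[0,1]$ and $\int_s^t(t-u)^{2H_1-1}\,\ud u=(t-s)^{2H_1}/(2H_1)$. Either way $(A1)$ follows with power $H_1$.

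The remaining and most structural step is $(B^+)$. For $s_1\le t_1\le s_2\le t_2$ I would write each increment as a single Wiener integral $X_{t_i}-X_{s_i}=\int_0^1 f_i(u)\,\ud W(u)$ with
$$
f_1 = \big(K(t_1,\cdot)-K(s_1,\cdot)\big)\ind{[0,s_1]} + K(t_1,\cdot)\ind{[s_1,t_1]},
$$
$$
f_2 = \big(K(t_2,\cdot)-K(s_2,\cdot)\big)\ind{[0,s_2]} + K(t_2,\cdot)\ind{[s_2,t_2]},
$$
so that by the isometry $\E(X_{t_1}-X_{s_1})(X_{t_2}-X_{s_2})=\int_0^1 f_1 f_2\,\ud u$. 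The crucial observation is that $f_1$ is supported on $[0,t_1]$, and on that whole interval both factors are non-negative: $f_1\ge 0$ because $K$ is non-negative and non-decreasing in its first argument by $(B1)$, while $f_2\ge 0$ there because $[0,t_1]\subset[0,s_2]$, on which $f_2=K(t_2,\cdot)-K(s_2,\cdot)\ge 0$, again by $(B1)$. Hence $\int_0^1 f_1 f_2\,\ud u=\int_0^{t_1} f_1 f_2\,\ud u\ge 0$, which is $(B^+)$.

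I expect the genuine content to lie in $(B^+)$: it is the only place where the monotonicity half of $(B1)$ is used, and the argument hinges on recognizing that the supports align so that the covariance integral runs over a region on which both integrand functions have a definite sign. The estimates $(A1)$ and $(A2)$ are essentially bookkeeping, the only mild care being the integrability of the singular weight $u^{-2r}$ near the origin (which is exactly where $r<1/2$ enters) and the restriction to $[1-\delta,1]$, which keeps $u^{-2r}$ bounded away from both $0$ and $\infty$ in the second integral of $(\star)$.
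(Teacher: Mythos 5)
Your proposal is correct and follows essentially the same route as the paper's proof: the same decomposition of the increment $X_t-X_s$ into the two Wiener integrals, the It\^o isometry identity for $\E(X_t-X_s)^2$, the same upper and lower estimates of the two deterministic integrals (using $r<1/2$ and the restriction $s\ge 1-\delta$), and the same sign argument for $(B^+)$. Your formulation of $(B^+)$ via the single product $\int_0^{t_1} f_1 f_2\,\ud u$ is just a repackaging of the paper's explicit split of the incremental covariance into the two non-negative integrals over $[0,s_1]$ and $[s_1,t_1]$, so there is no substantive difference.
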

\begin{proof} Note that  the process $X$ is correctly defined due to condition $K(t,\cdot)\in L^2{[0,t]}$ for any $t\in [0,1]$,  and it has a covariance function of the form
\begin{equation*}
\E X({t_1}) X({t_2}) =   \int_0^{t_1\wedge t_2 } K(t_1,z) K(t_2,z)  dz.
\end{equation*}
Further, for any $ 0 < t_1 < t_2 < 1$
\begin{equation*}
X_{t_2} - X_{t_1} = \int_0^{t_1}(K(t_2,z)- K(t_1,z))dW(z) + \int_{t_1}^{t_2} K(t_2,z) dW(z).
\end{equation*}
Therefore, for any    $s_1,t_1,s_2,t_2\in[0,1]$, $ s_1\leq t_1\leq s_2\leq t_2$   
\begin{equation}\begin{gathered}\label{eqaut11}
\E(X_{t_1}-X_{s_1})(X_{t_2}-X_{s_2}) =   \int_0^{s_1} (K(t_1,z)- K(s_1,z)) (K(t_2,z)- K(s_2,z)) dz\\+\int_{s_1}^{t_1} K(t_1,z)(K(t_2,z)- K(s_2,z)) dz.
\end{gathered}\end{equation}
If the kernel is non-negative and non-decreasing in the 1st argument, the right-hand side of \eqref{eqaut11} is non-negative so, condition $(B^+)$ holds.
Furthermore, for any $0\leq s<t\leq 1$
\begin{equation}\begin{gathered}\label{eqaut12}
\E (X(t)-X(s))^2 =   \int_0^{s} (K(t,z)- K(s,z))^2dz+\int_s^t K^2(t,z) dz.
\end{gathered}\end{equation}
Let $\delta\in [0,1]$ be fixed. Suppose that $s>1-\delta$ and estimate the right-hand side of \eqref{eqaut12} from above:
\begin{equation}\begin{gathered}\label{eqaut15}
\int_0^{s} (K(t,z)- K(s,z))^2dz+\int_s^t K^2(t,z) dz\leq
     D_2^2|t-s|^{2H_2}\int_0^sz^{-2r}dz\\+D_3^2 \int_s^tz^{-2r}(t-z)^{2H_2-1}dz \leq \frac{D_2^2}{1-2r}|t-s|^{2H_2}+D_3^2(1-\delta)^{-2r}|t-s|^{2H_2}=D_4|t-s|^{2H_2},
\end{gathered}\end{equation}
where $D_4=\frac{D_2^2}{1-2r}+D_3^2(1-\delta)^{-2r}$.
 The estimate under condition $(B3,a)$ is evident:
\begin{equation}\begin{gathered}\label{eqaut16}
\int_0^{s} (K(t,z)- K(s,z))^2dz+\int_s^t K^2(t,z) dz\geq  D_1^2|t-s|^{2H_1}\int_0^{1-\delta}z^{-2r}dz\\ \geq (1-\delta)^{1-2r}\frac{D_1^2}{1-2r}|t-s|^{2H_1}.
\end{gathered}\end{equation}
The estimate under condition $(B3,b)$ is also simple:
\begin{equation}\begin{gathered}\label{eqaut17}
\int_0^{s} (K(t,z)- K(s,z))^2dz+\int_s^t K^2(t,z) dz\geq D_1^2 \int_s^t (t-z)^{2H_1- 1}z^{-2r}dz\\ \geq (1-\delta)^{1-2r}\frac{D_1^2}{1-2r}|t-s|^{2H_1}.
\end{gathered}\end{equation}
Proof follows immediately from \eqref{eqaut15}--\eqref{eqaut17}.
\end{proof}
\begin{remark} Let $H\in(1/2,1)$ and consider the kernel
 $$K(t,s)=C_Hs^{1/2-H}\varphi(s)\int_s^tu^{H-1/2}(u-s)^{H-\frac32}du,$$ where $\varphi=\{\varphi(s), s\in[0,1]\}$
 is nonnegative measurable function satisfying assumption $0<k<\varphi(s)<K$, $C_H>0$ is some constant. Then the kernel is nonnegative and increasing in the first variable, so, condition $(B^+)$ holds and we can
 state with evidence that for any $0< z <s<t\leq 1$ $$|K(t,z)-K(s,z)|\leq C_H K s^{1/2-H}t^{H-1/2}(t-s)^{H-1/2}\leq C_H K s^{1/2-H}(t-s)^{H-1/2},$$
 so, we can put $r=H-1/2$ and $H_2=H-1/2$. Moreover, the  bound  from below has the form
 $$K(t,s)\geq C_Hk(t-s)^{H-1/2}\geq C_Hks^{1/2-H}(t-s)^{H-1/2},$$ and condition   $H_2> 2H_1-1$ is  satisfied since $H_1=H_2=H$. Therefore    process $X(t) = \int_0^t K(t,s) d W(s)$ has the properties supplying small ball bound. In the case when $\varphi(s)=1$ and the constant $C_H$ is defined correspondingly, process $\int_0^tK(t,s)dW(s)$ is a fractional Brownian motion. Now we can see that the small ball property of fBm is not a consequence of its stationary increments as one can deduce from previous results (\cite{li-shao}, for example), but of its property to be a quasi-helix. 
\end{remark}

\subsubsection{Volterra integral transform of fractional Brownian motion}
 Consider now fractional Brownian motion $B_H=\{B_H(t), t\in[0,1]\}$, the kernel $K=\{K(t,s):[0,1]^2\rightarrow \mathbb{R}\}$  and create Volterra fractional process  of the form $$X(t) = \int_0^t K(t,s) dB_H(s)$$ that exists under sufficient  condition $$\int_0^t\int_0^t |K(t,u) K(t,v)|  |u-v|^{2H-2}du dv<\infty, t\in[0,1].$$
 One of the simplest examples of Volterra fractional processes is  a fractional Ornstein-Uhlenbeck process $X=(X_t, t \in [0,T])$ satisfying the equation
\begin{equation}
X_t = X_0 + a \int_0^t X_s ds + B_t^H 
\label{OUeq}
\end{equation}
with   $a\in \mathbb{R}$. For $H>\frac{1}{2}$ the unique solution of equation \ref{OUeq} admits the representation
\begin{equation}
X_t = X_0 e^{at}+ e^{at}\int_0^t e^{-as}dB_s^H.
\end{equation}
Considering the particular case $X_0=0 $, we get the process $X_t = \int_0^t e^{a(t-s)} dB_s^H$ with kernel $K(t,s) = e^{a(t-s)}.$

\begin{theorem}
\label{Th4}
Let $H>\frac{1}{2}$ and let the kernel  $K=\{K(t,s), s,t\in [0,1]\}$   satisfy  $(B1)$ and the following conditions:
\begin{itemize}
\item[  $(B4)$] The kernel $K$ is bounded, i.e. there are constants $k$ and $K$ such that $0 < k \le K(t,s) \leq K$ for any $s,t \in [0,T]$;
 
\item[  $(B5)$] There exist constants  $C>0$ and $H_3 \geq 1/2$  for which  $|K(t_2,s) - K(t_1,s)| \leq C |t_2-t_1|^{H_3}$, $s$, $t_1$, $t_2 \in [0,1].$
\end{itemize}

Then the Gaussian  process $X$ with kernel $K$, $X_t = \int_0^t K(t,s) dB_s^H$, satisfies  conditions $(A1)$, $(A2)$ and $(B^+)$ with exponents $H_1 = H_3\wedge H$, $H_2 = H_3$.
\end{theorem}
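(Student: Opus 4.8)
The plan is to read everything off the covariance of Wiener integrals with respect to fractional Brownian motion. For $H>1/2$ one has, for deterministic integrands, the identity
\begin{equation*}
\E\left[\int_0^1 f\,\ud B^H_z\int_0^1 g\,\ud B^H_z\right]=\alpha_H\int_0^1\int_0^1 f(u)g(v)\abs{u-v}^{2H-2}\,\ud u\,\ud v,\qquad \alpha_H=H(2H-1),
\end{equation*}
and the integrability hypothesis stated before the theorem makes every integral below finite. Using the decomposition
\begin{equation*}
X_t-X_s=\int_0^s\big(K(t,z)-K(s,z)\big)\ud B^H_z+\int_s^t K(t,z)\,\ud B^H_z,\qquad s<t,
\end{equation*}
I would first record the analogue of \eqref{eqaut11}--\eqref{eqaut12}, expressing $\E(X_{t_1}-X_{s_1})(X_{t_2}-X_{s_2})$ and $\E(X_t-X_s)^2$ as double integrals of products of kernel increments against the strictly positive weight $\abs{u-v}^{2H-2}$.

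For $(B^+)$, take $s_1\le t_1\le s_2\le t_2$. By $(B1)$ the monotonicity of $K$ in its first argument makes each kernel increment $K(t_i,\cdot)-K(s_i,\cdot)$ non-negative, while $K\ge0$ handles the new interval $[s_i,t_i]$; hence both integrands are non-negative functions. Since the weight $\abs{u-v}^{2H-2}$ is strictly positive, the double integral representing $\E(X_{t_1}-X_{s_1})(X_{t_2}-X_{s_2})$ is non-negative, which is exactly $(B^+)$. This step is easier than its Wiener counterpart, where the Dirac weight forced a splitting into separately non-negative pieces; here positivity of the two integrands already suffices.

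For the upper bound I would not estimate the singular double integral block by block but instead invoke the Hardy--Littlewood inequality: for $H>1/2$ there is $c_H$ with $\E\big(\int_0^t f\,\ud B^H\big)^2\le c_H\big(\int_0^t\abs{f}^{1/H}\big)^{2H}$. With $f$ the integrand of $X_t-X_s$, condition $(B5)$ bounds the $[0,s]$-part by $C^{1/H}(t-s)^{H_3/H}$ and condition $(B4)$ bounds the $[s,t]$-part by $\mathcal K^{1/H}(t-s)$, where $\mathcal K$ is the upper bound in $(B4)$. Hence $\int_0^t\abs{f}^{1/H}\le C'\big((t-s)^{H_3/H}+(t-s)\big)$ and, raising to the power $2H$,
\begin{equation*}
\E(X_t-X_s)^2\le C''(t-s)^{2(H_3\wedge H)},
\end{equation*}
which is $(A2)$ with upper exponent $H_3\wedge H$ (equal to $H_3$ in the interesting regime $H_3\le H$).

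For the lower bound I would discard the two non-negative blocks coming from the $[0,s]$-part of the integrand (they are non-negative by the same positivity used for $(B^+)$) and keep only the diagonal block, where $(B4)$ gives $K\ge k$:
\begin{equation*}
\E(X_t-X_s)^2\ge\alpha_H\int_s^t\int_s^t K(t,u)K(t,v)\abs{u-v}^{2H-2}\,\ud u\,\ud v\ge k^2\alpha_H\int_s^t\int_s^t\abs{u-v}^{2H-2}\,\ud u\,\ud v=k^2(t-s)^{2H},
\end{equation*}
the last equality being the normalisation $\alpha_H\int_s^t\int_s^t\abs{u-v}^{2H-2}\,\ud u\,\ud v=(t-s)^{2H}$. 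This gives $(A1)$ with lower exponent $H$. The point I expect to require the most care is exactly this lower bound: the hypotheses provide only an upper estimate on the kernel increments $(B5)$ and no matching lower estimate, so no lower bound can be extracted from the off-diagonal part, and the genuine lower exponent is the roughness $H$ inherited from the driving fractional Brownian motion. Consequently the two-sided bounds obtained are $C_1(t-s)^{2H}\le\E(X_t-X_s)^2\le C_2(t-s)^{2(H_3\wedge H)}$, i.e.\ $(A1)$ with $H_1=H$ and $(A2)$ with $H_2=H_3\wedge H$, and one checks these satisfy $H_1\ge H_2$ as required. A secondary technical nuisance, only if one insists on the termwise route for $(A2)$ rather than Hardy--Littlewood, is the cross block between $[0,s]$ and $[s,t]$, where $\abs{u-v}^{2H-2}$ is singular across the common endpoint $s$; an explicit integration shows it is $O\big((t-s)^{1+H_3}\big)$ and hence dominated by the other terms.
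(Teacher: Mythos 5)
Your proof is correct and follows essentially the same route as the paper: $(B^+)$ comes from non-negativity of every block in the fBm covariance formula (via $(B1)$), the upper bound comes from the $L^{1/H}$-norm estimate (your ``Hardy--Littlewood'' inequality is exactly the estimate the paper cites from [MMV], which you apply once to the combined integrand where the paper first splits $\E|X_t-X_s|^2$ into two integrals), and the lower bound comes from discarding the non-negative off-diagonal blocks and using $K\ge k$ on the diagonal block $\int_s^t\int_s^t$. One point deserves emphasis: your concluding exponents, $H_1=H$ for the lower bound $(A1)$ and $H_2=H_3\wedge H$ for the upper bound $(A2)$, differ from the exponents announced in the theorem ($H_1=H_3\wedge H$, $H_2=H_3$), but they coincide with what the paper's own proof actually establishes --- the paper's concluding sentences even swap the labels $(A1)$ and $(A2)$, so the stated exponent assignment is garbled rather than your argument being deficient. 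Your diagnosis that no lower exponent better than $H$ can follow from these hypotheses is also right: a constant kernel $K\equiv k$ satisfies $(B1)$, $(B4)$, $(B5)$ for every $H_3$, yet then $X=kB^H$ and $\E(X_t-X_s)^2=k^2|t-s|^{2H}$, which violates a lower bound with exponent $H_3<H$.
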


\begin{proof}
Firstly, we note that  the process $X$ is correctly defined due to condition $(B1)$ and has a covariance function of the form ( see, e.g.,  \cite{Mish}, \cite{NVV})
\begin{equation}
\E X_{t_1} X_{t_2} = H(2H-1) \int_0^{t_1} \int_0^{t_2} K(t_1,s) K(t_2,v) |s-v|^{2H-2}ds dv.
\end{equation}
Further, for any $ 0 < t_1 < t_2 < 1$
\begin{equation}
X_{t_2} - X_{t_1} = \int_0^{t_1}(K(t_2,s)- K(t_1,s))dB_s^H + \int_{t_1}^{t_2} K(t_2,s) dB_s^H.
\end{equation}
Therefore, for   any  $s_1,t_1,s_2,t_2\in[0,1]$, $ s_1\leq t_1\leq s_2\leq t_2$   we have that
\begin{gather*} %\nonumber
 \E(X_{t_1}-X_{s_1})(X_{t_2}-X_{s_2})\\
 = \E \left( \int_0^{s_1}  \left(K(t_1,v) -K(s_1,v) \right)dB_v^H + \int_{s_1}^{t_1}  K(t_1,v) dB_v^H\right)
\\  \times\left( \int_0^{s_2}  \left(K(t_2,v) -K(s_2,v) \right)dB_v^H + \int_{s_2}^{t_2}  K(t_2,v) dB_v^H\right)\\\nonumber
= H (2H-1) \Big( \int_0^{s_1} \int_{0}^{s_2} \left( K(t_1,v) -K(s_1,v)\right)\left( K(t_2,z) -K(s_2,z)\right)|v-z|^{2H-2}dvdz \label{l1}\\ +\nonumber
\int_0^{s_1} \int_{s_2}^{t_2}  K(t_2,v)\left(K(t_1,z) -K(s_1,z)\right)|v-z|^{2H-2}dvdz
\\\nonumber
+ \int_{s_1}^{t_1} \int_0^{s_2} \left( K(t_2,v) -K(s_2,v)\right)K(t_1,z)|v-z|^{2H-2}dvdz\\\nonumber
+ \int_{s_1}^{t_2} \int_{s_2}^{t_2} K(t_1,v)  K(t_2,z) |v-z|^{2H-2}dvdz\Big)\geq 0,
 \end{gather*}
therefore, property $(B^+)$ holds.

Furthermore,
\begin{equation*}\begin{gathered}
\E |X_{t}- X_s|^2   
  =\E \Big|\int_0^{s }   ( K(t,v)- K(s ,v) ) dB_v^H  + \int_{s}^{t}  K(t,v) dB_v^H \Big|^2\\
 \leq  2 \E  \left|\int_0^{s }   ( K( t ,v)- K(s ,v) ) dB_v^H\right|^2  + 2\E\Big|\int_{s }^{t}  K(t,v) dB_v^H \Big|^2.
\end{gathered}\end{equation*}

According to \cite{MMV}, there exists a constant $C_H$ depending only on $H$ such that %{\color{red}(here [MMV]  ? what is it?)}
\begin{equation}
\begin{gathered}
\E  \left|\int_0^{s }   ( K(t,v)- K(s ,v) ) dB_v^H \right|^2 \\ \leq C_H \Big(\int_0^{s }  \left( K(t,v)- K(s,v)\right)^{\frac{1}{H}} dv\Big)^{2H} \leq C_H \, K^2 \,|t-s|^{2H_3} 
\end{gathered}\end{equation}

and
\begin{eqnarray*}
\E \left|\int_{s}^{t}  K(t,v) dB_v^H \right|^2 \leq C_H \left( \int_{s }^{t}  \left(K(t,v)\right)^{\frac{1}{H}} dv \right)^{2H} \leq C_H K^2 |t-s|^{2H},
\end{eqnarray*}
and condition $(A1) $ holds with $H_2=H_3\wedge H$. Finally, 
\begin{equation*}\begin{gathered}
\E |X_{t}- X_s|^2
  \geq C_H\int_s^{t} \int_s^{t}    K(t,v)K(t,z)|v-z|^{2H}dvdz\geq    k^2|t-s|^{2H}, 
\end{gathered}\end{equation*}
whence condition $(A2)$ follows. 

\end{proof}
\begin{remark}
Evidently, a fractional Ornstein-Uhlenbeck process with positive drift $a$ and zero initial value satisfies the assumptions  of the above theorem. 
  
But the representation result of Theorem~\ref{cor:main} is also valid for a fractional Ornstein--Uhlenbeck process with a negative drift. Indeed, by the fractional Girsanov theorem (see e.g.\ \cite{Mish}), its law is equivalent to that of the fractional Brownian motion driving it. Moreover, the fractional Brownian motion generates the same filtration as the fractional Brownian motion. Therefore, the representation theorem for fractional Brownian motion can be transfered to the fractional Ornstein--Uhlenbeck process via the Girsanov transform. Naturally, the same may be done for a wide class of processes, e.g.\ solutions of  stochastic differential equations with fractional Brownian motion. (See also the discussion in Remark~\ref{rem:equivalence} above.)
\end{remark}

\nocite{*}
\bibliographystyle{abbrv}
\bibliography{represent-2}

\begin{thebibliography}{10}

\bibitem{azmoo-vita}
E.~Azmoodeh and L.~Viitasaari.
\newblock A general approach to small deviation via concentration of measures.
\newblock 2014.
\newblock arXiv:math.PR/1407.3553.

\bibitem{bojdecki}
T.~Bojdecki, L.~G. Gorostiza, and A.~Talarczyk.
\newblock Sub-fractional {B}rownian motion and its relation to occupation
  times.
\newblock {\em Statist. Probab. Lett.}, 69(4):405--419, 2004.

\bibitem{dudley}
R.~M. Dudley.
\newblock {Wiener functionals as Ito integrals.}
\newblock {\em Ann. Probab.}, 5:140--141, 1977.

\bibitem{houdre}
C.~Houdr{\'e} and J.~Villa.
\newblock An example of infinite dimensional quasi-helix.
\newblock In {\em Stochastic models ({M}exico {C}ity, 2002)}, volume 336 of
  {\em Contemp. Math.}, pages 195--201. Amer. Math. Soc., Providence, RI, 2003.

\bibitem{kahane}
J.-P. Kahane.
\newblock H\'elices et quasi-h\'elices.
\newblock In {\em Mathematical analysis and applications, {P}art {B}}, volume~7
  of {\em Adv. in Math. Suppl. Stud.}, pages 417--433. Academic Press, New
  York-London, 1981.

\bibitem{kahanebook}
J.-P. Kahane.
\newblock {\em Some random series of functions}, volume~5 of {\em Cambridge
  Studies in Advanced Mathematics}.
\newblock Cambridge University Press, Cambridge, second edition, 1985.

\bibitem{li-shao}
W.~V. Li and Q.-M. Shao.
\newblock Gaussian processes: inequalities, small ball probabilities and
  applications.
\newblock In {\em Stochastic processes: theory and methods}, volume~19 of {\em
  Handbook of Statistics}, pages 533--597. North-Holland, Amsterdam, 2001.

\bibitem{MMV}
J.~M{\'e}min, Y.~Mishura, and E.~Valkeila.
\newblock Inequalities for the moments of {W}iener integrals with respect to a
  fractional {B}rownian motion.
\newblock {\em Statist. Probab. Lett.}, 51(2):197--206, 2001.

\bibitem{mish-shev-valk}
Y.~Mishura, G.~Shevchenko, and E.~Valkeila.
\newblock Random variables as pathwise integrals with respect to fractional
  {B}rownian motion.
\newblock {\em Stochastic Process. Appl.}, 123(6):2353--2369, 2013.

\bibitem{Mish}
Y.~S. Mishura.
\newblock {\em Stochastic calculus for fractional {B}rownian motion and related
  processes}, volume 1929 of {\em Lecture Notes in Mathematics}.
\newblock Springer-Verlag, Berlin, 2008.

\bibitem{NVV}
I.~Norros, E.~Valkeila, and J.~Virtamo.
\newblock An elementary approach to a {G}irsanov formula and other analytical
  results on fractional {B}rownian motions.
\newblock {\em Bernoulli}, 5(4):571--587, 1999.

\bibitem{samko}
S.~G. Samko, A.~A. Kilbas, and O.~I. Marichev.
\newblock {\em Fractional integrals and derivatives. Theory and applications}.
\newblock Gordon and Breach Science Publishers, Yverdon, 1993.

\bibitem{vita-shev}
G.~{Shevchenko} and L.~{Viitasaari}.
\newblock {Integral representation with adapted continuous integrand with
  respect to fractional Brownian motion.}
\newblock {\em {Stochastic Anal. Appl.}}, 32(6):934--943, 2014.

\bibitem{vita-shev2}
G.~Shevchenko and L.~Viitasaari.
\newblock Adapted integral representations of random variables.
\newblock {\em International Journal of Modern Physics: Conference Series}, 36,
  2015.

\bibitem{vita}
L.~Viitasaari.
\newblock Integral representation of random variables with respect to
  {G}aussian processes.
\newblock {\em Bernoulli}, 2015.
\newblock to appear, arXiv:math.PR/1307.7559.

\bibitem{zahle}
M.~Z{\"a}hle.
\newblock On the link between fractional and stochastic calculus.
\newblock In {\em Stochastic dynamics ({B}remen, 1997)}, pages 305--325.
  Springer, New York, 1999.

\end{thebibliography}

\end{document}